\newtheorem{theorem}{Theorem}[section]
\newtheorem{lemma}[theorem]{Lemma}
\newtheorem{proposition}[theorem]{Proposition}
\newtheorem{corollary}[theorem]{Corollary}
\theoremstyle{definition}
\newtheorem{definition}[theorem]{Definition}
\newtheorem{example}[theorem]{Example}
\newtheorem{remark}[theorem]{Remark}
\newcommand{\zexp}{\mathfrak{exp}\mspace{1mu}}
\newcommand{\zco}{\mathfrak{C}\mspace{1mu}}
\newcommand{\zre}{\mathfrak{R}\mspace{1mu}}
\newcommand{\zdu}{\mathfrak{D}\mspace{1mu}}
\newcommand{\CZ}{\mathbb{C}\mathfrak{Z}}
\newcommand{\Mat}{{\rm Mat}}
\newcommand{\RZ}{\mathbb{R}\mathfrak{Z}}
\newcommand{\Z}{\mathfrak{Z}}
\newcommand{\Ann}{{\rm Ann}_\Z}
\DeclareMathOperator{\sgn}{sgn}
\DeclareMathOperator{\rnk}{rank}
\DeclareMathOperator{\tr}{tr}
\DeclareMathOperator{\spn}{span}
\newcommand{\zm}{\mathbf{0}}
\newcommand{\mI}{\mathbb{I}}
\numberwithin{equation}{section}
\title{A Spectral Theorem for Zeon Matrices}
\author{G.~Stacey Staples\footnote{Email: sstaple@siue.edu}}
\affil{Department of Mathematics \&
Statistics\\
Southern Illinois University Edwardsville\\
Edwardsville, IL 62026-1653\\
USA}
\date{}  
\begin{document}

\maketitle

\begin{abstract}
In this paper, spectral properties of matrices with (complex) zeon entries are investigated.  
It is shown that when $A$ is an $m\times m$ self-adjoint matrix whose characteristic polynomial $\chi_A(u)$ has $m$ ``spectrally simple'' zeros $\lambda_1, \ldots, \lambda_m$ in the zeon algebra ${\mathbb{C}\mathfrak{Z}}$, there exist $m$  linearly independent normalized zeon eigenvectors $v_1, \ldots, v_m$ such that $A=\bigoplus_{j=1}^m \lambda_j\pi_j$, where $\pi_j=v_j{v_j}^\dag$ is a rank-one projection onto the zeon submodule ${\rm span}\{v_j\}$ for $j=1, \ldots, m$.   
\\
\\
MSC: Primary 15B33, 15A18, 05C50, 05E15, 81R05
\\
keywords: Spectral theory, Zeons
\end{abstract}

\section{Introduction}

Letting $Z_n$ denote the multiplicative semigroup generated by a collection of commuting null-square variables $\{\zeta_{\{i\}}:1\le i\le n\}$ and identity $1=\zeta_\varnothing$, the resulting $\mathbb{R}$-algebra,  denoted here by $\RZ_n$ has appeared in multiple guises over many years.  In recent years, $\RZ_n$ has come to be known as the {\em $n$-particle zeon\footnote{The name ``zeon algebra'' was coined by Feinsilver~\cite{Feinsilverzeons}, stressing their relationship to both bosons (commuting generators) and fermions (null-square generators).} algebra}.   They naturally arise as commutative subalgebras of fermions.

Combinatorial properties of zeons have been applied to graph enumeration problems, partition-dependent stochastic integrals, and routing problems in communication networks, as summarized in \cite{OCGraphs}.   More recent combinatorial applications include graph colorings~\cite{StaplesStellhorn2017} and Boolean satisfiability~\cite{DavisStaples2019}.   Combinatorial identities involving zeons have also been developed in papers by Neto~\cite{NetoJIS2014,NetoJIS2015,NetoJIS2016,NetoSIAM}.   
   
A permanent trace formula analogous to MacMahon's Master Theorem was presented and applied by Feinsilver and McSorley in \cite{FeinsilverMcSorleyIJC}, where the connections of zeons with permutation groups acting on sets and the Johnson association scheme were illustrated.  

Polynomials over the finite-dimensional complex zeon algebra $\CZ_n$ were first considered in \cite{simplezeonzeros}.   The current paper makes use of essential results to find eigenvalues of zeon matrices while also replacing finite-dimensional zeon algebras with the infinite-dimensional complex zeon algebra $\CZ$.  

Terminology and essential properties of the complex zeon algebra $\CZ$ are found in Section \ref{preliminaries}.   A discussion of $\CZ$-linear algebra, including nuances of linear independence and the $\CZ$-module $\CZ^m$, follows in Section \ref{essential zeon linear algebra}

Zeon matrices are viewed as $\CZ$-linear transformations of the module ${\CZ}^m$ in Section \ref{matrices as operators}, where their properties are investigated.  Our attention turns to zeon eigenvalues and eigenvectors in Section \ref{zeon eigenvalues}.  The zeon spectral theorem is established in Section \ref{zeon spectral theorem}.  

Matrix exponentials are considered in Section \ref{zeon matrix exponential}.  In particular, a number of cases are detailed in which the  series expansion $\exp(A)=\displaystyle\sum_{k=0}^\infty \frac{A^k}{k!}$ reduces to a finite sum.  The paper concludes with a summary and discussion of avenues for future work in Section \ref{conclusion}.

\section{Preliminaries}\label{preliminaries}

Let $\Z$ denote the Abelian semigroup generated by the collection $\{\zeta_{\{\ell\}}: \ell\in\mathbb{N}\}$, along with the identity 1, subject to the following multiplication rules:
\begin{gather*}
\zeta_{\{\ell\}}\,\zeta_{\{j\}}=\zeta_{\{\ell,j\}}=\zeta_{\{j\}}\,\zeta_{\{\ell\}}\,\,\text{ \rm
for }\ell\ne j,\text{ \rm and}\\
{\zeta_{\{\ell\}}}^2=0\,\,\text{ \rm for }\ell\ne 0.
\end{gather*}
The identity of $\Z$ is denoted by $1_\Z$.  Identifying $1_\Z$ with the scalar identity $1\in\mathbb{C}$, the associated semigroup algebra is denoted by $\CZ$.  In particular, we obtain the complex {\em zeon algebra} defined as follows.

\begin{definition}
Let $\CZ$ denote the infinite-dimensional complex Abelian algebra generated by the collection $\{\zeta_{\{i\}}: i\in \mathbb{N}\}$ along with the scalar $1=\zeta_\varnothing$ subject to the following multiplication rules:
\begin{gather*}
\zeta_{\{i\}}\,\zeta_{\{j\}}=\zeta_{\{i,j\}}=\zeta_{\{j\}}\,\zeta_{\{i\}}\,\,\text{ \rm
for }i\ne j,\text{ \rm and}\\
{\zeta_{\{i\}}}^2=0\,\,\text{ \rm for }i\in \mathbb{N}.
\end{gather*}
For each finite subset $I$ of $\mathbb{N}$, define $\displaystyle\zeta_{I}=\prod_{\iota\in I}\zeta_\iota$.  Letting the finite subsets of positive integers be denoted by $[\mathbb{N}]^{<\omega}$, the algebra $\CZ$ has a canonical basis of the form $\{\zeta_I: I\in [\mathbb{N}]^{<\omega}\}$.   Elements of this basis are referred to as the {\em basis blades} of $\CZ$.   The algebra $\CZ$ is called the  {\em (complex) zeon algebra}.
\end{definition}

As a vector space, this algebra has a canonical basis of  {\em basis blades} of the form $\{\zeta_I: I\subseteq [n]\}$.  The null-square property of the generators $\{\zeta_j:1\le j\le n\}$ guarantees that the product of two basis blades satisfies the following:\begin{equation}\label{blade product}
\zeta_I\zeta_J=\begin{cases}
\zeta_{I\cup J}& I\cap J=\varnothing,\\
0&\text{\rm otherwise.}
\end{cases}
\end{equation}

Any element $u\in\CZ$ can be expressed as a linear combination of basis blades indexed by finite subsets of $\mathbb{N}$.   It should be clear that $\CZ$ is graded.  For non-negative integer $k$, the {\em grade-$k$ part} of element $u=\sum_I u_I\zeta_I$ is defined as \begin{equation*}
\langle u\rangle_k=\sum_{\{I:|I|=k\}}u_I\zeta_I.
\end{equation*}

Given  $u=\sum_I u_I\zeta_I\in\CZ$, the {\em complex conjugate} of $u$ is defined by \begin{equation*}
\overline{u}=\sum_I \overline{u_I}\zeta_I.
\end{equation*}

The {\em real zeon algebra} $\Z$ is the subalgebra of $\CZ$ defined by \begin{equation*}
\Z=\{u\in\CZ: \overline{u}=u\}.
\end{equation*}

An inner product $\langle\cdot,\cdot\rangle:\CZ\to\mathbb{C}$ is defined by sesquilinear extension of \begin{equation*}
\langle u,\zeta_I\rangle = u_I,
\end{equation*}
where $u_I\in\mathbb{C}$ is the coefficient of $\zeta_I$ in the canonical expansion of $u$.
With this inner product in hand, the {\em (inner product) norm} of $u\in\CZ$ is defined by \begin{equation}\label{IP norm on CZ}
\Vert u\Vert=\left(\sum_I \vert u_I\vert^2\right)^{1/2}.
\end{equation}

\begin{definition}
For a zeon $u\ne 0$, it is useful to define the {\em minimal grade} of $u$ by \begin{equation}\label{mingrade def}
\natural u =\begin{cases}
\min\left\{k\in \mathbb{N}: \langle \zdu u\rangle_k\ne 0\right\}& \zdu u\ne 0,\\
0&u=\zco u.
\end{cases}
\end{equation}
Note that $\natural u=0$ if and only if $u$ is a scalar.  In this case, $u$ is said to be {\em trivial}.
\end{definition}

The maximal ideal consisting of nilpotent zeon elements will be denoted by \begin{equation*}
{\CZ}^\circ=\{u\in \CZ: \zco u=0\}.
\end{equation*}   
The multiplicative abelian group of invertible zeon elements is denoted by \begin{eqnarray*}
{\CZ}^\times&=&\CZ\setminus {\CZ}^\circ\\
&=&\{u\in \CZ: \zco u\ne 0\}.
\end{eqnarray*}

For convenience, arbitrary elements of $\CZ$ will be referred to simply as ``zeons.''  In what follows, it will be convenient to separate the scalar part of a zeon from the rest of it. To this end, for $z\in\CZ_n$ we write $\zco z=\langle z\rangle_0$, the {\em complex (scalar) part} of $z$, and $\mathfrak{D}z=z-\zco z$, the {\em dual part}~\footnote{The term ``dual'' here is motivated by regarding zeons as higher-dimensional dual numbers.} of $z$.

We note that  $u\in\CZ_n$ is invertible if and only if $\zco u\ne 0$.  Moreover, the multiplicative inverse of $u$ is unique.  

\begin{proposition}\label{zeon inverses}
Let $u\in\CZ$, and let $\kappa$ denote the index of nilpotency~\footnote{In particular, $\kappa$ is the least positive integer such that $(\mathfrak{D}u)^\kappa=0$.} of $\mathfrak{D}u$.    It follows that $u$ is uniquely invertible if and only if $\zco u\ne0$, and the inverse  is given by \begin{equation}\label{inv def}
u^{-1}=\frac{1}{\zco u}\sum_{j=0}^{\kappa-1}(-1)^{j}(\zco u)^{-j}(\mathfrak{D}u)^{j}.
\end{equation}
\end{proposition}

\begin{proof}
Proof is by direct computation, observing that a telescoping sum is obtained:
\begin{align*}
u\left(\frac{1}{\zco u}\sum_{j=0}^{\kappa-1}(-1)^{j}(\zco u)^{-j}(\zdu u)^j\right)&=
(\zco u+\zdu u)\sum_{j=0}^{\kappa-1}(-1)^{j}({\zco u})^{-(j+1)}(\zdu u)^j\\
&=\sum_{j=0}^{\kappa-1}(-1)^{j}({\zco u})^{-j}(\zdu u)^j\\
&\hskip10pt+\sum_{j=0}^{\kappa-1}(-1)^{j}({\zco u})^{-(j+1)}(\zdu u)^{j+1}\\
&=1\pm ({\zco u})^{-(\kappa)}(\zdu u)^\kappa=1.
\end{align*}
\end{proof}

\begin{remark}
Another way to see the result of Proposition \ref{zeon inverses} is to first recall that if the geometric series $\sum_{j=0}^\infty x^j$ converges, its limit is $\frac{1}{1-x}$.  Letting $a=\zco u\ne 0$ and writing $u=a+\zdu u$, we see that \begin{align*}
u^{-1}=(a+\zdu u)^{-1}&=a^{-1}\frac{1}{1- (-a\zdu u)}\\
&=a^{-1}\sum_{j=0}^{\kappa-1}(-1)^{j}a^{-j}(\zdu u)^j,
\end{align*}
where nilpotency of $\zdu u$ reduces the infinite series to a finite sum, eliminating any concern about lack of convergence.  
\end{remark}

Finally, we turn to zeon contraction operators.  These operators are useful for characterizing solutions of $\CZ$-linear equations.  

\begin{definition}
The {\em left- and right-contraction operators} $\lrcorner$ and $\llcorner$ are defined  by $\CZ$-linear extension of \begin{equation}\label{contraction def}
\zeta_I\lrcorner \zeta_J=\zeta_J\llcorner\zeta_I=\begin{cases}
\zeta_{J\setminus I}&\text{\rm if }I\subseteq J,\\
0&\text{\rm otherwise.}
\end{cases}
\end{equation}
Identifying $\zeta_\varnothing$ with the unit scalar $1$, we have $\zeta_I\lrcorner\zeta_I=1$.
\end{definition}

\begin{example}
Let $u=3\zeta_{\{1,2\}}-4\zeta_{\{2,4,5\}}$.  Then,
\begin{align*}
\zeta_{\{1\}}\lrcorner u=u\llcorner \zeta_{\{1\}}&=3\zeta_{\{2\}},&\zeta_{\{1,2\}}\lrcorner u=u\llcorner \zeta_{\{1,2\}}&=3,
\end{align*}
and
\begin{align*}
\zeta_{\{2\}}\lrcorner u=u\llcorner \zeta_{\{2\}}&=3\zeta_{\{1\}}-4\zeta_{\{4,5\}}.
\end{align*}

\end{example}

\subsubsection{Zeon Annihilators}

Given a nonzero nilpotent zeon element  $q=\sum_{I}q_I\zeta_I\in\CZ^\circ$ and nontrivial basis blade $\zeta_X$, it is evident that $\zeta_{X} q=0$ if and only if $X\cap I\ne\varnothing$ whenever $q_I\ne 0$.  That is, $\zeta_X\zeta_I=0$ whenever $q_I\ne 0$.  

\begin{definition}
Let $q=\sum_{I}q_I\zeta_I\in\CZ^\circ$.  A basis blade $\zeta_X$ is an {\em annihilator} of $q$ if $\zeta_X q=0$.  
\end{definition}

An easy way to obtain an annihilator of $q\in\CZ^\circ$ is to simply take the union of all multi-indices appearing in the canonical expansion of $q$.  

\begin{definition}\label{index support def}
Let $q=\sum_{I}q_I\zeta_I\in\CZ$.  The {\em index support of $q$} is defined to be \begin{equation}
[q]=\bigcup_{\{I: q_I\ne 0\}}I.
\end{equation}
\end{definition}

The index support of a nilpotent $q$ is used to obtain a null monomial that ``annihilates'' $q$; i.e., $q\zeta_{[q]}=0$.   For this reason, $\zeta_{[q]}$ will be referred to as a {\em minimal annihilator} of $q\in\CZ^\circ$.  More generally, $q\zeta_{[q]}=(\zco q) \zeta_{[q]}$ for arbitrary $q\in\CZ$, so that $\zeta_{[q]}$ is an annihilator of $\zdu q$. 

\begin{example}
Let $q=3+4\zeta_{\{2\}}-5\zeta_{\{1,3,4\}}.$  Then $[q]=\{1,2,3,4\}$ and 
\begin{align*}
q\zeta_{[q]}&=(3+4\zeta_{\{2\}}-5\zeta_{\{1,3,4\}})\zeta_{\{1,2,3,4\}}\\
&=3\zeta_{\{1,2,3,4\}}.
\end{align*}  
\end{example}

\begin{lemma}[Annihilator Ideals]\label{annihilator ideal}
Given nilpotent $q\in\CZ^\circ$, define $\Ann(q)=\{\alpha\in\CZ^\circ: \alpha q=0\}$.  Then $\Ann(q)$ is an ideal of $\CZ^\circ$.
\end{lemma}

\begin{proof}
Clearly, $\eta, \mu\in\Ann(q)$ implies $(\eta+\mu)q=\eta q+\mu q=0$ and $(\alpha \eta)q=\alpha (\eta q)=0$ so that $\Ann(q)$ is a subring of $\CZ^\circ$.  Moreover, for any $\alpha\in\CZ$, we have $(\alpha\eta)q=\alpha(\eta q)=\alpha\cdot0=0$.
\end{proof}

\begin{remark}
Given $\mathbf{v}=(v_1, v_2, \ldots, v_m)^\intercal\in(\CZ^\circ)^m$, we see that  $\omega \mathbf{v}=\mathbf{0}$ for any $\omega\in\Ann(\{v_1, \ldots, v_m\})$.
\end{remark}

\section{$\CZ$-Linear Algebra}\label{essential zeon linear algebra}

We begin by considering equations of the form $\alpha u=\beta$, where $\alpha, \beta\in\CZ$ and immediately observe that the equation has the unique solution $u=\alpha^{-1}\beta$ if $\alpha\in\CZ^\times$.  On the other hand, if $\alpha\in\CZ^\circ$ and $\beta\in\CZ^\times$, the equation has no solution because $\CZ^\circ$ is an ideal.  

When $\alpha,\beta\in\CZ^\circ$, the following cases must be considered.  

\begin{description}
\item [Case 1: $\beta=0$.]
If $\beta=0$, then any $u\in\Ann(\alpha)$ is a solution.   

\item [Case 2: $\natural \alpha>\natural\beta$.]
When $\natural \alpha>\natural \beta$, the equation has no solutions because we either have $\alpha u=0$ or $\natural (\alpha u)\ge \natural\alpha$ for any $u\in\CZ$.  

\item [Case 3: $\natural \alpha=\natural\beta$.]
If there exists a scalar $c\in\mathbb{C}$ such that $\langle\beta\rangle_k=c\langle\alpha\rangle_k$ for each $k=1, \ldots, \sharp\beta$, then the equation $\alpha u=\beta$ has solution set $c+\Ann \alpha$, since $\alpha (c+\eta)=c\alpha+0=\beta$ for any $\eta\in\Ann\alpha$.  If no such scalar exists, the equation has no solutions.

\item [Case 4: $\natural \alpha<\natural\beta$.]  The only possible solutions are elements of the form $u=c\beta\llcorner \alpha+\eta$, where $\eta\in\Ann(\alpha)$ and $c\in\mathbb{C}$.   However, solutions need not exist.
\end{description}

In all four cases, the equation $\alpha u=\beta$ has either no solution or infinitely many solutions.  The single homogeneous equation $\alpha u=0$ is thereby considered to be {\em linearly dependent} when $\alpha\in\CZ^\circ$.  A homogeneous linear equation in $m$ variables is considered to be {\em linearly independent} if and only if one or more coefficients is invertible; i.e., for some $\ell\in\{1, \ldots, m\}$, \begin{equation*}
\sum_{j=1}^m \alpha_j u_j=0\Rightarrow u_\ell=-{\alpha_\ell}^{-1}\sum_{j\ne \ell}\alpha_j u_j.
\end{equation*}

Summarizing, we have established the following theorem (noting that cases 3 and 4 of our previous discussion can be combined).

\begin{theorem}
Given $\alpha,\beta\in\CZ$, let $S=\{u\in\CZ: \alpha u-\beta=0\}$,  the solution set of the zeon linear equation $\alpha u-\beta=0$.  If $\alpha\in\CZ^\times$, then $S=\{\alpha^{-1}\beta\}$, the unique solution of the equation. Assuming $\alpha\in\CZ^\circ$  is nilpotent, the solutions are characterized as follows.  

\begin{enumerate}[i.]
\item $S=\varnothing$  if $\beta\in\CZ^\times$ or if $\natural\alpha>\natural\beta$;
\item $S= \Ann(\alpha)$ if $\beta=0$;
\item $S\subseteq\{c\beta\llcorner\alpha+\eta: \eta\in\Ann(\alpha)\}$ for some $c\in\mathbb{C}$ if $\natural\alpha\le\natural\beta$.   
\end{enumerate}  
\end{theorem}

\begin{example}
Let $\alpha=2\zeta_{\{1,2\}}-5\zeta_{\{2,3\}}$.  Then $\Ann(\alpha)=\langle \zeta_{\{2\}}\rangle$.  It follows that $\alpha u=0$ for any $u\in\Ann(\alpha)$.

Let $\beta=\zeta_{\{1\}}+\zeta_{\{2\}}$.  Since $\natural\alpha=2>1=\natural \beta$, the equation $\alpha u=\beta$ has no solution.

Let $\beta=4\zeta_{\{1,2\}}-10\zeta_{\{2,3\}}$.  Then, $\alpha u=\beta$ has solutions of the form $u=2+\eta$ for any $\eta\in\Ann(\alpha)$.  For example, choosing $u=2+\zeta_{\{2\}}\in 2+\Ann(\alpha)$ gives \begin{align*}
\alpha(2+\zeta_{\{2\}})&=(2\zeta_{\{1,2\}}-5\zeta_{\{2,3\}})(2+\zeta_{\{2\}})\\
&=4\zeta_{\{1,2\}}-10\zeta_{\{2,3\}}=\beta.
\end{align*}
Finally we consider the case $\natural\alpha<\natural\beta$, letting $\beta=\zeta_{\{1,2,4\}}-\zeta_{\{2,3,4\}}$.  Here, \begin{equation*}
\beta\llcorner\alpha=(\zeta_{\{1,2,4\}}-\zeta_{\{2,3,4\}})\llcorner(2\zeta_{\{1,2\}}-5\zeta_{\{2,3\}})=2\zeta_{\{4\}}-5\zeta_{\{4\}}=-3\zeta_{\{4\}}.
\end{equation*}
So, letting $u=\zeta_{\{4\}}+\eta$, where $\eta\in\Ann(\alpha)$, we obtain\begin{align*}
\alpha u = (2\zeta_{\{1,2\}}-5\zeta_{\{2,3\}})(\zeta_{\{4\}}+\eta)&=
2\zeta_{\{1,2,4\}}-5\zeta_{\{2,3,4\}}+0=\beta.
\end{align*}
\end{example}

\subsubsection*{Multivariable $\CZ$-Linear Equations}

Consider now the multivariable $\CZ$-linear equation \begin{equation*}
\alpha_1 u_1+\alpha_2 u_2+\cdots+ \alpha_n u_n=\gamma.
\end{equation*}
If any of the coefficients $\{\alpha_j: 1\le j\le n\}$ is invertible, the equation has infinitely many solutions.  In particular, if $\alpha_j\in\CZ^\times$, then the equation is solved for $u_j$ by \begin{equation*}
u_j={\alpha_j}^{-1}\left(\gamma-\sum_{\ell\ne j}\alpha_\ell u_\ell\right),
\end{equation*}
where the remaining variables $u_\ell$ are free for $\ell\ne j$.  

When all coefficients are nilpotent the situation is more complicated, and solutions need not exist. However, solutions always exist for the case $\gamma=0$, since the ideal $\Ann(\alpha_1)\times\cdots\times \Ann(\alpha_n)$ is a subset of the solution space of $\alpha_1 u_1+\cdots+\alpha_n u_n=0$ when $\alpha_j\in\CZ^\circ$ for $j=1, \ldots, n$.

\subsection{Linear Independence in $\CZ^m$}

Given positive integer $m$, the collection of $m$-tuples of zeon elements constitutes the $\CZ$-module
\begin{align*}
{\CZ}^m&=\underset{m\, \text{\rm times}}{\underbrace{\CZ\times \cdots \times \CZ}}\\
&=\{(\alpha_1, \alpha_2, \ldots, \alpha_m): \alpha_1, \ldots, \alpha_m\in \CZ\}.  
\end{align*}

A system of $m$ zeon linear equations in $n$ variables $\{u_1, \ldots, u_n\}$ is naturally represented by a matrix equation $Au=b$ where $A\in\Mat(m\times n;\CZ)$, $b\in\CZ^m$ and $u=(u_1, \ldots, u_n)^\intercal$.

A collection $\{w_1, \ldots, w_n\}\subset\CZ^m$ is said to be {\em $\CZ$-linearly independent} if for $\alpha_1, \ldots, \alpha_n\in\CZ$, the following implication holds:
\begin{equation*}
\alpha_1 w_1+\cdots + \alpha_n w_n=0 \Rightarrow \alpha_1=\cdots=\alpha_n =0.
\end{equation*}

Note that if $w\in (\CZ^\circ)^m$, $\alpha w=0\Rightarrow \alpha\in\Ann w\ne \{0\}$.  Hence, a singleton from the module $\CZ^m$ is only $\CZ$-linearly independent if it has one or more invertible entries.  

The {\em rank} of a zeon matrix is the number of $\CZ$-linearly independent rows (or columns) in the matrix.  Hence, $\rnk A=\rnk\zco A$.  

Viewing $A\in\Mat(m\times n;\CZ)$ as a $\CZ$-linear transformation $A:\CZ^n\to\CZ^m$, the rank-nullity theorem guarantees 
\begin{equation}
m=\rnk(A)+\ker(A).
\end{equation}

Given a linear equation $\alpha u=\beta$ having a unique solution (i.e., $\alpha\in\CZ^\times$),  multiplying both sides by a nilpotent zeon element $q$ may result in an equation  having infinitely many solutions.  For example, \begin{equation*}
q\alpha u=q\beta\Rightarrow q(\alpha u-\beta)=0
\end{equation*}
has solutions $u=\alpha^{-1}\beta$ (the unique solution to the original equation), and $u\in\CZ$ such that $\alpha u-\beta\in\Ann q$.
On the other hand, multiplying the equation $\alpha u=\beta$ by an invertible zeon $q$ leaves the solution invariant, since $q(\alpha u-\beta)=0$ if and only if $\alpha u-\beta=0$ when $q\in\CZ^\times$.

Because of this, we will restrict our elementary row operations to consider only {\em invertible} zeon multiples of rows.  

\begin{example}
Suppose $\alpha_1\in\CZ^\times$ and $\alpha_2\in\CZ^\circ$.  Then \begin{equation*}
\alpha_1 u_1+\alpha_2u_2=0\Rightarrow u_1=-{\alpha_1}^{-1}\alpha_2 u_2\in\CZ^\circ.
\end{equation*}
The complete set of solutions is thus $S=\{(-{\alpha_1}^{-1}\alpha_2 \gamma,\gamma):\gamma\in\CZ^\circ\}$.
Solutions include $\{(0,\gamma): \gamma\in\Ann\alpha_2\}$.
\end{example}

\section{Zeon Matrices as $\CZ$-Linear Operators on $\CZ^m$}\label{matrices as operators}

Given positive integer $m$, the algebra of square zeon matrices $\Mat(m;\CZ)$ will be regarded as the algebra of $\CZ$-linear operators on $\CZ^m$.

For $m\in \mathbb{N}$, let $\Mat(m;\CZ)$ denote the algebra of $m\times m$ matrices having entries from the zeon algebra $\CZ$.  A matrix $A\in\Mat(m;\CZ)$ can naturally be written as a sum of the form $A=A_\varnothing+\mathfrak{A}$ of a complex-valued matrix $A_\varnothing = \zco A$ and a nilpotent, zeon-valued matrix $\mathfrak{A}=\zdu A$.   
Note that the invertible elements of $\Mat(m; \CZ)$ constitute a multiplicative group.  

Given $A\in\Mat(m;\CZ)$, it is not difficult to verify that  \begin{equation*}
A(\alpha\mathbf{x}_1+\mathbf{x}_2)=\alpha A\mathbf{x}_1+A\mathbf{x}_2
\end{equation*}
 for $\mathbf{x}_1, \mathbf{x}_2\in{\CZ}^m$ and $\alpha\in \CZ$; i.e., $A$ is a $\CZ$-linear operator on ${\CZ}^m$,

\subsection*{The Determinant}

The determinant of $A\in\Mat(m;\CZ)$ is defined in the usual way by \begin{equation*}
|A|=\sum_{\sigma\in \mathcal{S}_m}\sgn(\sigma)\prod_{j=1}^m a_{j\,\sigma(j)},
\end{equation*}
where $\mathcal{S}_m$ is the symmetric group of order $m!$ and $\sgn (\sigma)$ is the signature of the permutation $\sigma$.  

\begin{lemma}\label{scalar det}
For all $A\in\Mat(m;\CZ)$,  $\zco |A| = |\zco A|$. 
\end{lemma}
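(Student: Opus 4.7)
The plan is to show that the scalar-part map $\zco:\CZ_n\to\mathbb{C}$ is a ring homomorphism, and then simply apply it termwise to the Leibniz formula defining $|A|$.

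First I would verify that $\zco$ is additive and multiplicative. Additivity is immediate from the definition $\zco z=\langle z\rangle_0=z_\varnothing$. For multiplicativity, given $u,v\in\CZ_n$, write $u=\zco u+\zdu u$ and $v=\zco v+\zdu v$, so that
\begin{equation*}
uv=(\zco u)(\zco v)+(\zco u)(\zdu v)+(\zdu u)(\zco v)+(\zdu u)(\zdu v).
\end{equation*}
Each of the last three summands lies in the nilpotent ideal ${\CZ_n}^\circ$ (the $1\times 1$ case of Lemma \ref{nilpotent ideal of zeon matrices}, or directly from \eqref{blade product}, since any basis-blade expansion of $\zdu u$ or $\zdu v$ involves only $\zeta_I$ with $I\ne\varnothing$, and such blades multiplied by anything produce no $\zeta_\varnothing$ component). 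Therefore all three have zero scalar part, and $\zco(uv)=(\zco u)(\zco v)$.

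Next I would apply $\zco$ directly to the determinant formula. Since $\zco$ is $\mathbb{R}$-linear and multiplicative, and the signs $\sgn(\sigma)\in\{\pm 1\}$ are unaffected,
\begin{equation*}
\zco|A|=\zco\!\!\sum_{\sigma\in\mathcal{S}_m}\!\sgn(\sigma)\prod_{j=1}^m a_{j\,\sigma(j)}=\sum_{\sigma\in\mathcal{S}_m}\sgn(\sigma)\prod_{j=1}^m\zco(a_{j\,\sigma(j)})=\sum_{\sigma\in\mathcal{S}_m}\sgn(\sigma)\prod_{j=1}^m(\zco A)_{j\,\sigma(j)}=|\zco A|,
\end{equation*}
which is the desired identity.

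There is no real obstacle here; the entire content of the lemma is the fact that $\zco$ is a ring homomorphism onto $\mathbb{C}$, which in turn rests on the elementary observation that ${\CZ_n}^\circ$ is an ideal consisting precisely of elements with vanishing scalar part. Everything else is a routine transport of a polynomial identity through that homomorphism.
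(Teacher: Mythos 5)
Your proof is correct and follows essentially the same route as the paper: both establish $\zco(uv)=(\zco u)(\zco v)$ by decomposing $u=\zco u+\zdu u$ and noting the cross terms lie in ${\CZ_n}^\circ$, then push $\zco$ through the Leibniz sum term by term. No gaps.
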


\begin{proof}
Given arbitrary $u,v\in \CZ$,  one observes that \begin{align*}
\zco (uv)&=\zco \left((\zco u+\zdu u)(\zco v+\zdu v)\right)\\
&=\zco \left(\zco u \zco v+\zco u\zdu v+\zco v\zdu u+\zdu u\zdu v\right)\\
&=\zco u \zco v.
\end{align*}
Extending to arbitrary products in the determinant, one obtains
\begin{align*}
\zco |A|
&=\zco \left(\sum_{\sigma\in \mathcal{S}_m}\sgn(\sigma)\prod_{j=1}^m (\zco a_{j\,\sigma(j)}+\zdu a_{j\,\sigma(j)})\right)\\
&=\sum_{\sigma\in \mathcal{S}_m}\sgn(\sigma)\zco \left(\prod_{j=1}^m (\zco a_{j\,\sigma(j)}+\zdu a_{j\,\sigma(j)})\right)\\
&=\sum_{\sigma\in \mathcal{S}_m}\sgn(\sigma)\prod_{j=1}^m \zco a_{j\,\sigma(j)}.
\end{align*}
\end{proof}

 \begin{definition}
A matrix $A\in\Mat(m;\CZ)$ is said to be {\em singular} if it is not invertible. In particular, $A$ is singular if and only if $|A|\in\CZ^\circ$.   
\end{definition}

\begin{proposition}\label{complex endo}
The mapping $A\mapsto \zco A$ is an algebra homomorphism  \break $\Mat(m;\CZ)\to {\rm End}(\mathbb{C}^m)$.
\end{proposition}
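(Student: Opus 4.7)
The plan is to verify directly that $A\mapsto\zco A$ preserves the three pieces of algebra structure: addition (and complex-scalar multiplication), matrix multiplication, and the identity. Well-definedness is immediate: for any $A=(a_{ij})\in\Mat(m,\CZ_n)$, each $\zco a_{ij}\in\mathbb{C}$, so $\zco A=(\zco a_{ij})$ is a genuine complex $m\times m$ matrix, i.e. an element of ${\rm End}(\mathbb{C}^m)$.

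Additivity and $\mathbb{C}$-linearity are entry-wise and follow at once from the fact that $\zco:\CZ_n\to\mathbb{C}$ is a $\mathbb{C}$-linear projection (by the decomposition $z=\zco z+\zdu z$ of an arbitrary zeon). The identity matrix $\mI\in\Mat(m,\CZ_n)$ has entries in $\{0,1\}\subseteq\mathbb{C}$, each fixed by $\zco$, so $\zco\mI$ is the identity endomorphism of $\mathbb{C}^m$.

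The substantive step is multiplicativity. The key algebraic fact, already extracted in the proof of Lemma~\ref{scalar det}, is that for any $u,v\in\CZ_n$,
\begin{equation*}
\zco(uv)=(\zco u)(\zco v),
\end{equation*}
which holds because every cross-term from expanding $(\zco u+\zdu u)(\zco v+\zdu v)$ that involves a dual part lies in ${\CZ_n}^\circ$ and therefore contributes nothing to the scalar part. Applying this entry-by-entry, for any $A,B\in\Mat(m,\CZ_n)$ and indices $1\le i,j\le m$,
\begin{equation*}
\zco\bigl((AB)_{ij}\bigr)=\zco\!\left(\sum_{k=1}^m a_{ik}b_{kj}\right)=\sum_{k=1}^m (\zco a_{ik})(\zco b_{kj})=\bigl((\zco A)(\zco B)\bigr)_{ij},
\end{equation*}
so $\zco(AB)=(\zco A)(\zco B)$, completing the verification.

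There is no real obstacle here; the only content is the scalar-part multiplicativity for individual zeons, and that has already been dispatched in the preceding lemma. The proposition is essentially the observation that this entrywise identity lifts, by linearity of the matrix product formula, to matrices — a formal consequence rather than a new computation.
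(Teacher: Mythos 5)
Your proof is correct and follows essentially the same route as the paper, which simply asserts the verification of $\zco(A+B)=\zco A+\zco B$ and $\zco(AB)=(\zco A)(\zco B)$ after identifying ${\rm End}(\mathbb{C}^m)$ with complex $m\times m$ matrices. You merely make explicit the entrywise computation and the key fact $\zco(uv)=(\zco u)(\zco v)$, which the paper had already extracted in the proof of Lemma~\ref{scalar det}.
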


\begin{proof}
Recognizing that ${\rm End}(\mathbb{C}^m)$ is isomorphic to the algebra of $m\times m$ complex matrices, the result is established by verifying that $\zco(A+B)=\zco A+\zco B$ and that $\zco(AB)=(\zco A)(\zco B)$ for matrices $A,B\in \Mat(m;\CZ)$. \end{proof}

By Proposition \ref{complex endo}, $\zco(A^k)=(\zco A)^k$.   Hence, the following corollary.

\begin{corollary}
A matrix $A\in\Mat(m;\CZ)$ is singular if and only if $\zco A$ is singular.    
\end{corollary}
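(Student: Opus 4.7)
The plan is to prove both directions of the iff, interpreting ``singular'' as ``not invertible as an operator on $\CZ_n^m$,'' equivalently ``having no two-sided matrix inverse in $\Mat(m,\CZ_n)$.'' One direction is an immediate consequence of Proposition \ref{complex endo}, while the converse requires the nilpotent-ideal structure made available by Lemma \ref{nilpotent ideal of zeon matrices}.

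First, I would dispatch the forward direction. Suppose $A$ is invertible, with $AB=BA=I$ for some $B\in\Mat(m,\CZ_n)$. Applying the algebra homomorphism $\zco:\Mat(m,\CZ_n)\to \mathrm{End}(\mathbb{C}^m)$ from Proposition \ref{complex endo}, one obtains
\begin{equation*}
(\zco A)(\zco B)=\zco(AB)=\zco I = I,
\end{equation*}
and symmetrically $(\zco B)(\zco A)=I$, so $\zco A$ is invertible. Contrapositively, if $\zco A$ is singular then $A$ is singular.

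For the converse, assume $\zco A$ is nonsingular. Write $A=\zco A+\zdu A$ and factor
\begin{equation*}
A=(\zco A)\bigl(I+N\bigr),\quad\text{where}\quad N=(\zco A)^{-1}\zdu A.
\end{equation*}
Here $\zdu A$ has entries in ${\CZ_n}^\circ$ and $(\zco A)^{-1}$ has complex entries, so by Lemma \ref{nilpotent ideal of zeon matrices} every entry of $N$ lies in ${\CZ_n}^\circ$ and $N$ is nilpotent, say $N^\kappa=0$. Consequently the telescoping identity gives
\begin{equation*}
(I+N)\sum_{j=0}^{\kappa-1}(-1)^j N^j=I,
\end{equation*}
so $I+N$ is invertible, and therefore $A$ is invertible with $A^{-1}=\bigl(\sum_{j=0}^{\kappa-1}(-N)^j\bigr)(\zco A)^{-1}$.

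The only subtle step is the reverse direction, where one must exhibit the inverse explicitly rather than argue abstractly; the key observation enabling this is precisely Lemma \ref{nilpotent ideal of zeon matrices}, which promotes ``entries in the nilpotent ideal'' to ``matrix is nilpotent'' and thereby unlocks the geometric-series inverse for $I+N$. Alternatively, one could route the argument through Lemma \ref{scalar det} by showing $A$ is invertible iff $|A|\in\CZ_n^\times$ iff $\zco|A|=|\zco A|\ne 0$, via an adjugate formula; but the factorization above is more direct and stays within the module-theoretic framework already set up.
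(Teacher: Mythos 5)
Your proof is correct and follows the same route the paper takes: the forward direction is the homomorphism argument of Proposition~\ref{complex endo}, and your converse is precisely the nilpotent geometric-series inverse that the paper records as Proposition~\ref{zeon matrix inverse}. The only difference is that the paper states this corollary as an immediate consequence of the homomorphism alone and defers the explicit inverse construction to the later subsection on the matrix inverse (with proof cited to an earlier reference), whereas you supply that step in place --- a gain in self-containedness, not a different method.
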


The following lemma is a natural consequence of the determinant definition.

\begin{lemma}[Properties of the determinant]
Let $A$ and $B$ be $m\times m$ matrices over $\CZ$, and let $\alpha\in\CZ$.  Then the following hold:\begin{align*}
|AB|&=|A| |B|,\\
|\alpha A|&=\alpha^m |A|.
\end{align*}
In particular, $|A^{-1}|=|A|^{-1}$ when $A$ is invertible.
\end{lemma}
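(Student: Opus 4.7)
The plan is to exploit the fact that $\CZ_n$ is a \emph{commutative} ring, so each of these three identities is an instance of a classical determinant identity that holds over an arbitrary commutative ring. The Leibniz formula used to define $|\cdot|$ is already the correct definition in that generality, so no new machinery is needed beyond what the paper has established, together with Lemma \ref{scalar det} and Proposition \ref{complex endo} for the invertibility statement.

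First I would handle $|\alpha A|=\alpha^m|A|$, which falls out of the Leibniz formula in one line: since each product $\prod_{j=1}^m (\alpha a_{j\,\sigma(j)})$ pulls out an $\alpha^m$ using commutativity of $\CZ_n$, the scalar $\alpha^m$ factors through the sum over $\sigma\in \mathcal{S}_m$. Next, for $|AB|=|A||B|$ I would imitate the standard commutative-ring proof: write $AB=(\sum_\ell a_{j\ell}b_{\ell k})$, expand $|AB|$ via Leibniz, and interchange summations to obtain a sum over pairs $(\sigma,\varphi)\colon [m]\to [m]$. Using commutativity of $\CZ_n$ to rearrange factors, the contribution of any non-injective $\varphi$ vanishes by the usual sign-cancellation argument, and what remains is indexed by $\mathcal{S}_m\times\mathcal{S}_m$ and factors as $|A|\,|B|$. (An equally acceptable route is to observe that $|AB|=|A||B|$ is a universal polynomial identity over $\mathbb{Z}$ in the entries of $A,B$, hence valid in every commutative ring, in particular $\CZ_n$.)

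Finally, the inverse formula is an immediate consequence of the product formula once we know $|A|$ is invertible in $\CZ_n$. From $AA^{-1}=\mI$ and the multiplicativity just proved, $|A|\,|A^{-1}|=|\mI|=1$, so it suffices to show $|A|\in{\CZ_n}^\times$. This is where the invertibility criterion from Proposition \ref{zeon inverses} enters: by Lemma \ref{scalar det}, $\zco|A|=|\zco A|$, and by the corollary to Proposition \ref{complex endo}, $A$ invertible forces $\zco A$ invertible in $\mathrm{End}(\mathbb{C}^m)$, so $|\zco A|\ne 0$, hence $\zco|A|\ne 0$ and $|A|$ is invertible.

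The main obstacle, such as it is, is purely bookkeeping: verifying in the Leibniz expansion that the cross terms involving non-bijective index maps cancel. This is where commutativity of the zeon product is essential — over a noncommutative ring the reordering of factors needed for cancellation would fail, and indeed $|AB|=|A||B|$ can fail for matrices over noncommutative rings. Since every step reduces to a manipulation in the commutative ring $\CZ_n$, no zeon-specific difficulty arises, and the proof the paper gives should be essentially a sentence pointing to this commutative-ring reasoning together with the invertibility argument above.
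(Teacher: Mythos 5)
Your proposal is correct and matches the paper's approach: the paper's entire proof is the single sentence ``The result follows directly from the determinant definition,'' and your argument is precisely the standard commutative-ring Leibniz-formula reasoning that sentence points to. (One tiny simplification: once you have $|A|\,|A^{-1}|=1$, the element $|A|$ is automatically a unit with inverse $|A^{-1}|$, so the separate detour through Lemma~\ref{scalar det} to verify $\zco|A|\ne 0$ is not needed for this step.)
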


The following result was initially established for matrices over real zeon algebras.   Its proof is straightforward, but the interested reader can find details in \cite{zeon laplacian}. 

\begin{lemma}
Let $A\in\Mat(m;\CZ)$.  Then, $A$ is nilpotent if and only if $\zco A$ is nilpotent.
\end{lemma}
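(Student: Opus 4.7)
The forward direction is immediate from Proposition~\ref{complex endo}: if $A^k = 0$, then $(\zco A)^k = \zco(A^k) = \zco(0) = 0$, so $\zco A$ is nilpotent. The real content is the converse, for which my plan is to decompose $A = C + N$ with $C = \zco A$ (complex, nilpotent by hypothesis) and $N = \zdu A$ (entries in ${\CZ_n}^\circ$), then expand $A^k = (C+N)^k$ as a sum of $2^k$ words of length $k$ in the non-commuting letters $C$ and $N$ and argue each word vanishes once $k$ is large enough.

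The key observation is that commutativity of $\CZ_n$ (not of the matrices!) lets the \emph{entries} in any such matrix word be rearranged: an entry of a word with $s$ copies of $N$ is a sum of products of $k$ scalars, of which exactly $s$ are entries of $N$, hence members of ${\CZ_n}^\circ$. Because every generator $\zeta_{\{i\}}$ has minimal grade $1$, any product of $n+1$ elements of ${\CZ_n}^\circ$ has grade exceeding $n$ and is therefore $0$; equivalently $({\CZ_n}^\circ)^{n+1} = \{0\}$. (This is essentially the iterated form of Lemma~\ref{nilpotent ideal of zeon matrices}.) Consequently, every word containing at least $n+1$ occurrences of $N$ is the zero matrix.

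For the surviving words, which contain at most $n$ occurrences of $N$, write the word as $C^{a_0} N C^{a_1} N \cdots N C^{a_s}$ with $s \le n$ and $\sum_i a_i + s = k$. Let $p$ be an index of nilpotency of $C$, so $C^p = 0$. If any $a_i \ge p$, the word vanishes. Otherwise $a_i \le p-1$ for every $i$, giving $k = \sum_i a_i + s \le (p-1)(n+1) + n$. Hence, for any $k > (p-1)(n+1) + n$, every word in the expansion of $A^k$ is zero, and therefore $A^k = 0$.

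The main obstacle is the bookkeeping in the last paragraph: because $C$ and $N$ do not commute as matrices, I cannot simply invoke the binomial theorem, and I must do the count at the entry level, relying crucially on the commutativity of the underlying algebra $\CZ_n$ to collect the $N$-factors and exploit the nilpotency of the ideal ${\CZ_n}^\circ$. Once that is handled, the explicit bound $k > (p-1)(n+1) + n$ falls out and the proof is complete.
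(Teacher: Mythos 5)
Your proof is correct. Note that the paper itself does not actually prove this lemma; it defers to \cite{zeon laplacian} with the remark that the argument is straightforward, so there is no in-text proof to compare against. Your forward direction via Proposition~\ref{complex endo} is exactly right, and your converse --- expanding $(C+N)^k$ into words, killing every word with more than $n$ occurrences of $N$ via $({\CZ_n}^\circ)^{n+1}=\{0\}$, and killing the remaining words by the pigeonhole count that forces some block $C^{a_i}$ with $a_i\ge p$ --- is sound; the commutativity of $\CZ_n$ is indeed the point that lets you collect the $N$-entries inside each scalar product even though $C$ and $N$ need not commute as matrices. A slightly slicker packaging of the same idea, which is presumably what the cited reference has in mind: $\Mat(m,{\CZ_n}^\circ)$ is a two-sided ideal of $\Mat(m,\CZ_n)$ satisfying $\bigl(\Mat(m,{\CZ_n}^\circ)\bigr)^{n+1}=\{0\}$ (Lemma~\ref{nilpotent ideal of zeon matrices} iterated), and $\zco$ is precisely reduction modulo this ideal onto $\Mat(m,\mathbb{C})$; thus $(\zco A)^p=0$ gives $A^p\in\Mat(m,{\CZ_n}^\circ)$, whence $A^{p(n+1)}=(A^p)^{n+1}=\zm$. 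This recovers your bound $k\ge p(n+1)$ with no word bookkeeping, but your more explicit version is complete as written.
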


\subsection*{The Matrix Inverse}\label{matrix inverse}

The next proposition was established for real zeon matrices in \cite{zeon laplacian}.  The proof for the complex generalization differs only by substituting $\zco A$ for $\zre A$.

\begin{description}
\item [Zeon Matrix Inverse]
Let $A=(a_{ij})$ be a square matrix having entries from $\CZ$, and write $A=\zco A + \mathfrak{D}A$, where $\zco A=(\zco a_{ij})$.  It follows that $A$ is invertible if and only if $\zco A$ is invertible.  In this case, the inverse is given by \begin{equation*}
A^{-1}=(\zco A)^{-1}\sum_{\ell=0}^{\kappa(\mathfrak{D}A(\zco A)^{-1})-1}(-1)^\ell(\mathfrak{D}A(\zco A)^{-1})^{\ell}.
\end{equation*}
\end{description}

In light of Lemma \ref{scalar det}, the next corollary follows immediately.
\begin{corollary}
A matrix $A\in\Mat(m;\CZ)$ is singular if and only if $|A|$ is nilpotent.
\end{corollary}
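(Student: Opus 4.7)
The plan is to chain together the earlier corollary characterizing singularity in terms of $\zco A$ with Lemma \ref{scalar det}, reducing the claim to the standard fact that a complex matrix is singular precisely when its determinant vanishes, and then reinterpret ``scalar part of $|A|$ equals zero'' as nilpotency of $|A|$ in $\CZ_n$.

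More concretely, I would argue by the following chain of equivalences. First, by the corollary immediately following Proposition \ref{complex endo}, $A$ is singular if and only if the complex matrix $\zco A$ is singular. Second, since $\zco A \in {\rm End}(\mathbb{C}^m)$, classical linear algebra over $\mathbb{C}$ gives that $\zco A$ is singular iff $|\zco A|=0$. Third, Lemma \ref{scalar det} yields $|\zco A|=\zco|A|$, so $\zco A$ is singular iff $\zco|A|=0$. Finally, $\zco|A|=0$ means $|A|\in{\CZ_n}^\circ$, which is exactly the statement that $|A|$ is nilpotent (this is how ${\CZ_n}^\circ$ was defined, as the maximal ideal of nilpotent zeons, and recall that $u\in\CZ_n$ is nilpotent iff $\zco u=0$, since then $u=\zdu u$ is a sum of null-square-generator monomials of positive grade).

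I would present this as a four-line display of equivalences, citing Proposition \ref{complex endo}'s corollary for the first step, Lemma \ref{scalar det} for the third step, and noting that the fourth step is simply the definition of ${\CZ_n}^\circ$ together with the characterization of nilpotent zeons via vanishing scalar part.

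There is essentially no obstacle here: every link in the chain is a restatement of a result already established in the paper. The only minor care needed is to be explicit that ``$|A|$ nilpotent'' and ``$\zco|A|=0$'' are equivalent conditions on a single zeon element, which follows from the definition of the maximal ideal ${\CZ_n}^\circ$ given earlier in Section \ref{preliminaries}. Hence the proof will be a short one-paragraph verification, with no real computation required.
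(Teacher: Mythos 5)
Your proposal is correct and matches the paper's intent exactly: the paper derives this corollary immediately from Proposition \ref{zeon matrix inverse} (singularity of $A$ is equivalent to singularity of $\zco A$) together with Lemma \ref{scalar det}, which is precisely the chain of equivalences you write out. Your added remark that nilpotency of a zeon is equivalent to vanishing scalar part is the only step the paper leaves tacit, and you justify it correctly.
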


\subsubsection*{Zeon Gaussian Elimination}

It is easily verified that the following elementary (row) operations have the familiar effects on the matrix determinant.   If $A\mapsto A'$ via an elementary operation, then the effects are as follows:
\begin{enumerate}[i.]
\item exchanging two rows changes the sign, i.e., $|A'|=-|A|$;
\item multiplying a row by an invertible\footnote{Considering linear equations represented by the rows of a zeon matrix, we note that multiplying by a nilpotent zeon constant can change the solution set.  Hence, only multiplication by an invertible zeon element will be considered an elementary row operation.} zeon constant $u$ implies $|A'|=u|A|$;
\item adding any zeon multiple of one row to another gives $|A|=|A'|$.
\end{enumerate}

It is important to note that pivot elements of a zeon matrix must be invertible.  Hence, not every zeon matrix can be placed in row echelon form in the usual sense.   Without altering solution sets of matrix equations, a zeon matrix can be reduced (by elementary row operations) into a reduced form satisfying the following:
\begin{enumerate}
\item the left-most invertible entry (i.e., the pivot) of each row is to the right of the left-most invertible entry of every row above;
\item entries below each pivot are all zero.
\end{enumerate}

As in the scalar matrix case, elementary zeon matrices are defined as matrices obtained from the identity matrix by elementary zeon row operations.   All elementary matrices are invertible. 

\begin{example}
Consider the matrix $A$ given by 
\begin{equation*}
A=\left(
\begin{array}{ccc}
 2+\zeta _{\{1\}}& \zeta _{\{2\}} & 0 \\
 \zeta _{\{2\}} & 2-\zeta _{\{2\}} & 3 \zeta _{\{1,2,3\}} \\
 0 & 3 \zeta _{\{1,2,3\}} & 1-\zeta _{\{1,2,3\}}+\zeta _{\{1\}} \\
\end{array}
\right),
\end{equation*}
along with elementary matrix $E_1$ exchanging rows 1 and 2, and the elementary matrix $E_2$ that multiplies row 3 by the constant $2+3\zeta_{\{1,2\}}$, as given here
\begin{align*}
E_1&=\left(
\begin{array}{ccc}
 0 & 1 & 0 \\
 1 & 0 & 0 \\
 0 & 0 & 1 \\
\end{array}
\right);\hskip10pt 
E_2=\left(
\begin{array}{ccc}
 1 & 0 & 0 \\
 0 & 1 & 0 \\
 0 & 0 & 2+3 \zeta _{\{1,2\}} \\
\end{array}
\right).
\end{align*}
Then,
\begin{equation*}
E_1 A=\left(
\begin{array}{ccc}
 \zeta _{\{2\}} & 2-\zeta _{\{2\}} & 3 \zeta _{\{1,2,3\}} \\
 2+\zeta _{\{1\}} & \zeta _{\{2\}} & 0 \\
 0 & 3 \zeta _{\{1,2,3\}} & 1-\zeta _{\{1,2,3\}}+\zeta _{\{1\}} \\
\end{array}
\right)
\end{equation*}
and 
\begin{equation*}
E_2 A=\left(
\begin{array}{ccc}
 2+\zeta _{\{1\}} & \zeta _{\{2\}} & 0 \\
 \zeta _{\{2\}} & 2-\zeta _{\{2\}} & 3 \zeta _{\{1,2,3\}} \\
 0 & 6 \zeta _{\{1,2,3\}} & 2+3 \zeta _{\{1,2\}}-2 \zeta _{\{1,2,3\}}+2 \zeta _{\{1\}} \\
\end{array}
\right).
\end{equation*}

Further,  \begin{align*}
|A|&=4-3 \zeta _{\{1,2\}}-4 \zeta _{\{1,2,3\}}+6 \zeta _{\{1\}}-2 \zeta _{\{2\}},\\
|E_1 A|&=-4+3 \zeta _{\{1,2\}}+4 \zeta _{\{1,2,3\}}-6 \zeta _{\{1\}}+2 \zeta _{\{2\}}=-|A|,\\
|E_2 A|&=8+6 \zeta _{\{1,2\}}-8 \zeta _{\{1,2,3\}}+12 \zeta _{\{1\}}-4 \zeta _{\{2\}}=(2+3 \zeta _{\{1,2\}})|A|.
\end{align*}
\end{example}

\subsection{The Kernel of a Zeon Matrix}

Given $A\in\Mat(m;\CZ)$, the {\em kernel} of $A$ is the $\CZ$-module
$\ker(A)=\{u\in\CZ^m: Au=0\}$.  Any invertible zeon matrix has trivial kernel, since $Ax=0\Leftrightarrow x=A^{-1}0=0$.  Singular zeon matrices have nilpotent determinant.  

On the other hand, when $X\in\Mat(m;\CZ^\circ)$, i.e. $X$ is a rank-zero matrix with no invertible entries, we begin by considering the linear zeon equation \begin{equation*}
\alpha_1u_1+\alpha_2 u_2+\cdots+\alpha_m u_m=0,
\end{equation*}
where $\alpha_\ell\in\CZ^\circ$ for $\ell=1, \ldots, m$.  An obvious subset of solutions is the Cartesian product of the coefficients' annihilators:  \begin{align*}
(u_1, \ldots, u_m)&\in \Ann \alpha_1\times\Ann \alpha_2\times\cdots\Ann \alpha_m\\
&\Rightarrow \alpha_1u_1+\alpha_2 u_2+\cdots+\alpha_m u_m=0.
\end{align*}

Regarding the coefficients $\alpha_j$ as entries of the $i$th row of $X$, it is convenient to write $X=(\xi_1\vert \xi_2\vert\cdots\vert \xi_m)$ where $\xi_\ell\in(\CZ^\circ)^m$ is the $\ell$th column of $X$ and define the annihilator of $\xi_\ell$ in the natural way:  \begin{equation*}
\Ann \xi_\ell=\{u\in\CZ^\circ: u\,\xi_\ell=\zm\}.
\end{equation*}  
Hence, $ \Ann \xi_1\times\Ann \xi_2\times\cdots\Ann \xi_m\subseteq \ker(X)$ when $X\in\Mat(m;\CZ^\circ)$.

\begin{remark}
In fact, when $X\in\Mat(m;\CZ^\circ)$, there exists a basis blade $\zeta_{[X]}$ satisfying $\zeta_{[X]}X=\zm$.  Thus, \begin{equation*}
\langle\zeta^{[X]}\rangle^m\subseteq \Ann \xi_1\times\Ann \xi_2\times\cdots\Ann \xi_m\subseteq \ker(X),
\end{equation*}
where $\langle\zeta^{[X]}\rangle$ denotes the principal ideal of $\CZ$ generated by $\zeta_{[X]}$.
\end{remark} 

All singular zeon matrices have nontrivial kernel.  If $X\in\Mat(m;\CZ)$ is of rank $k<m$, then $X$ must have $m-k$ columns that will be nilpotent after Gaussian elimination.   For purposes of this paper, matrices of rank $m-1$ in $\Mat(m;\CZ)$ are of particular interest, as these will arise in the pursuit of eigenvectors.  

\section{Eigenvalues, Eigenvectors, and the Characteristic Polynomial}\label{zeon eigenvalues}

The interested reader is directed to the paper~\cite{simplezeonzeros} for a detailed study of zeon polynomials and their zeros.  For purposes of computing eigenvalues and eigenvectors of zeon matrices, only essential results are recalled here.

Given a complex zeon polynomial $\varphi(u)=\alpha_m u^m+\cdots+\alpha_1 u+\alpha_0$, a {\em complex polynomial} $f_\varphi:\mathbb{C}\to\mathbb{C}$ is induced by \begin{equation*}
f_\varphi(z)=\sum_{\ell=0}^m (\zco \alpha_\ell) z^\ell.
\end{equation*}
It follows that \begin{equation*}
f_\varphi(\zco u)=\sum_{\ell=0}^m (\zco \alpha_\ell) (\zco u)^\ell=\zco(\varphi(u)),\end{equation*}
so that $f_\varphi\circ\zco=\zco\circ \varphi$.

\subsection{Spectrally Simple Zeros of Complex Zeon Polynomials}

When the induced polynomial $f_\varphi(z)$ has a multiple root $w_0\in\mathbb{C}$, $\varphi(u)$ may or may not have a zero $w$ satisfying $\zco w=w_0$.  If it does, there are infinitely many!  For this reason, we restrict our attention to zeon polynomials whose induced complex polynomials have only simple zeros.

Letting $\varphi(u)$ be a nonconstant monic zeon polynomial, we consider $\lambda\in{\CZ}$ to be a {\em simple} zero of $\varphi$ if $\varphi(u)=(u-\lambda)g(u)$ for some zeon polynomial $g$ satisfying $g(\lambda)\ne 0$.  Recalling that the spectrum of an element $u$ in a unital algebra is the collection of scalars $\lambda$ for which $u-\lambda$ is not invertible, it follows that when $u\in\CZ$, the spectrum of $u$ is the singleton $\{\lambda=\zco u\}$.   

\begin{definition}
A zero $\lambda_0\in{\CZ}$ of $\varphi(u)$ is said to be a {\em spectrally simple} if $\zco \lambda_0$ is a simple zero of the complex polynomial $f_\varphi(z)$.
\end{definition}  

As seen in \cite{simplezeonzeros}, if $f_\varphi(z)$ has simple complex zero $z_0$, then $\varphi$ has a unique zeon zero $\lambda$ satisfying $\zco \lambda = u_0$.

Given $\varphi(u)\in\CZ[u]$ of degree $m\ge 1$, if the induced polynomial $f_\varphi(z)$ is a nonconstant complex polynomial whose zeros are all simple, then $\varphi(u)$ has exactly $m$ spectrally simple complex zeon zeros.  In this case, we say $\varphi$ {\em splits} over $\CZ$.

\subsubsection{Zeon Eigenvalues}

Given a square matrix $A\in \Mat(m;\CZ)$,  the {\em characteristic polynomial} of $A$ is defined in the usual way as the zeon polynomial $\chi_A(t)=|t\mI-A|$.   

\begin{definition}
Let $A\in \Mat(m;\CZ)$.  The {\em zeon eigenvalues} of $A$ are defined to be the spectrally simple zeros of the characteristic polynomial $\chi_A(t)$.  
\end{definition}

The following theorem formalizes the relationship between eigenvalues of the scalar matrix $\zco A$ and eigenvalues of $A\in\Mat(m;\CZ)$.  Its proof follows from our previous discussion.

\begin{theorem}
Let $A\in \Mat(m;\CZ)$.  If $\chi_{\zco A}(u)$ has simple complex zero $u_0$, then $A$ has a unique zeon eigenvalue $\lambda$ satisfying $\zco \lambda = u_0$.  
\end{theorem}

It is easy to verify that if $\lambda$ is a zeon eigenvalue of $A$, then $\zco \lambda$ is a complex eigenvalue of $\zco A$.  The next lemma is an immediate consequence.

\begin{lemma}
If $A$ is invertible, then all eigenvalues of $A$ are invertible.
\end{lemma}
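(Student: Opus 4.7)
The plan is to chain together three facts already developed in the paper: invertibility of $A$ is controlled by $\zco A$, zeon eigenvalues push down to complex eigenvalues under $\zco$, and invertibility of a single zeon is controlled by its scalar part.

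First I would invoke Proposition \ref{zeon matrix inverse} (equivalently, the corollary right after Proposition \ref{complex endo}): $A\in\Mat(m,\CZ_n)$ is invertible if and only if its scalar-part matrix $\zco A\in{\rm End}(\mathbb{C}^m)$ is invertible. Invertibility of $\zco A$ in turn means that $0$ is not a complex eigenvalue of $\zco A$, i.e., $f_{\chi_A}(0)\ne 0$ where $f_{\chi_A}$ is the complex polynomial induced by the characteristic polynomial $\chi_A(t)$. (If one wants to see this directly from the determinant, Lemma \ref{scalar det} together with $\chi_A(0)=(-1)^m|A|$ gives $f_{\chi_A}(0)=\zco\chi_A(0)=(-1)^m|\zco A|\ne 0$.)

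Next I would use the remark made immediately before the lemma: if $\lambda\in\CZ_n$ is a zeon eigenvalue of $A$, then $\zco\lambda$ is a complex eigenvalue of $\zco A$. Since $0$ is not such an eigenvalue, $\zco\lambda\ne 0$. Finally, Proposition \ref{zeon inverses} states that a zeon element $u\in\CZ_n$ is invertible precisely when $\zco u\ne 0$; applying this to $\lambda$ yields that $\lambda\in{\CZ_n}^\times$, which is what was to be shown.

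I do not foresee a real obstacle here: every ingredient is stated explicitly in Sections \ref{preliminaries}--\ref{zeon eigenvalues}, and the argument is essentially a two-step reduction ``invertibility upstairs $\Longleftrightarrow$ invertibility of scalar parts.'' The only point requiring minor care is that zeon eigenvalues were defined as \emph{spectrally simple} zeros of $\chi_A(t)$, so one should note that the spectrally-simple hypothesis is not actually used in the argument; all that matters is the well-known fact that any zeon zero $\lambda$ of $\chi_A$ satisfies $f_{\chi_A}(\zco\lambda)=\zco\chi_A(\lambda)=0$, so $\zco\lambda$ is a complex eigenvalue of $\zco A$.
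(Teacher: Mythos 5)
Your argument is correct and follows exactly the same route as the paper's own proof: reduce invertibility of $A$ to invertibility of $\zco A$, observe that $\zco\lambda$ is then a nonzero complex eigenvalue of $\zco A$, and conclude via the scalar-part criterion for invertibility of a zeon. You simply make explicit the supporting references (Proposition \ref{zeon matrix inverse}, Lemma \ref{scalar det}, Proposition \ref{zeon inverses}) that the paper leaves implicit.
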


\begin{proof}
If $A$ is invertible, then all eigenvalues of $\zco A$ are nonzero.  Hence, the scalar part of any zero of $\chi_A(u)$ must be nonzero.
\end{proof}

\subsection{Zeon eigenvectors}

Recall that for any nonzero $v\in {\CZ}^m$, the singleton $\{v\}$ can only be linearly independent if it has at least one invertible component.  That is, $\{v\}$ linearly independent implies $v\notin ({\CZ}^\circ)^m$.  

Given a spectrally simple zeon matrix $A\in\Mat(m; \CZ)$ having zeon eigenvalue $\lambda$, a zeon eigenvector $\xi$ associated with $\lambda$ will be a $\CZ$-linearly independent element of $\CZ^m$.  In particular, $\xi\in\ker(\lambda\mI-A)$.  The following lemma provides an essential tool for computing zeon eigenvectors.

\begin{lemma}\label{row reduction I}
Let $A\in\Mat(m;\CZ)$ be a zeon matrix of rank $m-1$.  If the $k$th column of $A$ consists entirely of nilpotent zeon elements, then there exists an invertible zeon matrix $Q$ such that $|QA|=\pm |A|$ and $QA$ is of the form \begin{equation}\label{QA matrix}
QA=\begin{pmatrix}
\alpha_1&0 &\cdots&0&\eta_1&0&0&\cdots&0\\
0&\alpha_2 & \cdots&0&\eta_2&0&0&\cdots&0\\
0&0&\ddots&0&\vdots&0&0&\cdots&0\\
0&0&\cdots&\alpha_{k-1}&\eta_{k-1}&0&0&\cdots&0\\
0&0&\cdots&0&\eta_{k}&0&0&\cdots&0\\
0&0&\cdots&0&\eta_{k+1}&\alpha_{k+1}&0&\cdots&0\\
0&0&\cdots&0&\eta_{k+2}&0&\alpha_{k+2}&\cdots&0\\
0&0&\cdots&0&\vdots&0&0&\ddots&0\\
0&0&\cdots&0&\eta_{m}&0&\cdots&0&\alpha_m\\
\end{pmatrix},
\end{equation}
where $\eta_1, \ldots, \eta_m\in\CZ^\circ$ and $\alpha_\ell\in\CZ^\times$ for $\ell\ne k$.  In particular, the determinant of $QA$ is \begin{equation*}
|QA|=\eta_k\prod_{\ell\ne k}\alpha_\ell.
\end{equation*}
\end{lemma}

\begin{proof}
Assuming $A$ has $m-1$ linearly independent rows, $m-1$ invertible elements can be placed along the main diagonal by row swapping.  Zeros can be obtained above and below each invertible element by adding invertible zeon multiples of rows together.   Assuming the $k$th row consists of nilpotent zeon elements, the $k$th column of the row reduced matrix consists of nilpotent elements as seen in \eqref{QA matrix}.  The determinant is computed by cofactor expansion, giving the result.  Using only the operations of swaps and addition of invertible zeon multiples of rows together, the determinant of $QA$ differs from that of $A$ by (at most) a sign; i.e., $|QA|=\pm |A|$.  Since all of the involved elementary row operations involve row swaps and addition of invertible zeon multiples, the resulting matrix $Q$, which is the product of the corresponding elementary matrices, is invertible.    
\end{proof}

\begin{definition}
Given a matrix $A\in \Mat(m;\CZ)$, a non-null $\xi\in{\CZ}^m$ is said to be a {\em zeon eigenvector} of $A$ if there exists a zeon eigenvalue $\lambda\in{\CZ}$ such that $A\xi=\lambda \xi$.
\end{definition}

Now suppose $\xi$ is a zeon eigenvector associated with zeon eigenvalue $\lambda$ and let $f(t)=a_kt^k+a_{k-1}t^{k-1}+\cdots+a_1 t+a_0$ be an arbitrary zeon polynomial.  The zeon matrix evaluation of $f$ then satisfies 
\begin{eqnarray*}
f(A)\xi&=&(a_kA^k+a_{k-1}A^{k-1}+\cdots+a_1 A+a_0\mI)\xi\\
&=&(a_k\lambda^k+a_{k-1}\lambda^{k-1}+\cdots+a_1 \lambda+a_0)\xi\\
&=&f(\lambda)\xi.
\end{eqnarray*}

\begin{theorem}\label{row reduction II}
Let $A\in\Mat(m;\CZ)$.  If $\lambda\in\CZ$ is a spectrally simple zero of $\chi_A(u)$, then there exists $\xi\in{\CZ}^m\setminus ({\CZ}^\circ)^m$ such that  $A\xi=\lambda\xi$.
\end{theorem}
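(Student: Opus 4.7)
The plan is to extract a zeon eigenvector from the Cayley--Hamilton factorization of $\chi_A$. Since $\lambda$ is, by hypothesis, a spectrally simple zero of $\chi_A(u)$, the definition of \emph{simple} zero recalled earlier produces a zeon polynomial $g(u)$ with
\begin{equation*}
\chi_A(u) = (u - \lambda)\, g(u), \qquad g(\lambda) \ne 0.
\end{equation*}
Applying the scalar-part homomorphism (Proposition \ref{complex endo}) coefficientwise to this polynomial identity yields $\chi_{\zco A}(u) = (u - \lambda_0)\, g_0(u)$, where $\lambda_0 := \zco \lambda$ and $g_0$ is the complex polynomial whose coefficients are the scalar parts of those of $g$. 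The spectrally simple hypothesis asserts that $\lambda_0$ is a simple zero of $f_{\chi_A}(z) = \chi_{\zco A}(z)$, so the factor $u - \lambda_0$ does not divide $g_0(u)$, forcing $g_0(\lambda_0) \ne 0$.

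By the Cayley--Hamilton theorem, $\chi_A(A) = \zm$, and since $A$ commutes with $\lambda \mI$ and with every polynomial in $A$, this rearranges to
\begin{equation*}
(A - \lambda \mI)\, g(A) = \zm.
\end{equation*}
Consequently every vector of the form $g(A)\, v$ satisfies the eigenvalue equation $A(g(A) v) = \lambda (g(A) v)$. The remaining task is to choose an input $v \in {\CZ_n}^m$ so that the output does not lie in $({\CZ_n}^\circ)^m$.

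For this, pick any complex eigenvector $v_0 \in \mathbb{C}^m$ of the complex matrix $\zco A$ associated with $\lambda_0$; such a $v_0$ exists because $\lambda_0$ is a root of $\chi_{\zco A}$. View $v_0$ inside ${\CZ_n}^m$ and set $\xi := g(A)\, v_0$. Using Proposition \ref{complex endo} (so that $\zco$ commutes with polynomial evaluation on matrices) together with the eigenvector relation $(\zco A) v_0 = \lambda_0 v_0$, one computes
\begin{equation*}
\zco \xi \;=\; g_0(\zco A)\, v_0 \;=\; g_0(\lambda_0)\, v_0,
\end{equation*}
which is nonzero because $g_0(\lambda_0) \ne 0$ and $v_0 \ne 0$. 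Hence at least one component of $\xi$ has nonzero scalar part and is therefore invertible in $\CZ_n$, so $\xi \notin ({\CZ_n}^\circ)^m$, while $A\xi = \lambda\xi$ by the Cayley--Hamilton step above.

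There is no real obstacle here: the only delicate point is the bookkeeping for the scalar-part homomorphism applied to polynomial evaluations of $A$, and this is handled uniformly by Proposition \ref{complex endo}. The argument essentially reduces the zeon eigenvalue problem for $\lambda$ to the classical existence of a complex eigenvector of $\zco A$ for $\lambda_0$, with Cayley--Hamilton providing the lift from $\mathbb{C}^m$ back up to ${\CZ_n}^m$.
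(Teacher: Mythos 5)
Your proof is correct, but it takes a genuinely different route from the paper's. The paper argues via Gaussian elimination: it reduces $\lambda\mI-A$ to upper triangular form, asserts that spectral simplicity forces exactly $m-1$ invertible pivots on the diagonal with a zero last row, and then solves the resulting system with one free variable. You instead exploit the factorization $\chi_A(u)=(u-\lambda)g(u)$ with $g(\lambda)\ne 0$ built into the paper's definition of a simple zero, apply Cayley--Hamilton to get $(A-\lambda\mI)g(A)=\zm$, and produce the eigenvector explicitly as $\xi=g(A)v_0$ for a complex eigenvector $v_0$ of $\zco A$; the scalar-part homomorphism then gives $\zco\xi=g_0(\lambda_0)v_0\ne 0$, which is exactly the invertibility condition $\xi\notin({\CZ_n}^\circ)^m$. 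Your argument is arguably tighter: the paper's claim that the reduction ``immediately'' yields $m-1$ invertible pivots is left unjustified (it rests on $\lambda_0$ having geometric multiplicity one for $\zco A$), whereas each step of yours is backed by a stated result (the definition of simple zero, Cayley--Hamilton, Proposition~\ref{complex endo}), and you get a closed-form eigenvector for free. What the paper's approach buys in exchange is structural information your construction does not directly provide: the eigenspace is parametrized by a single free variable, so the solution set is a rank-one $\CZ_n$-submodule, which is what lets the paper speak of the resulting vector as a basis for the eigenspace.
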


\begin{proof}
Since $\lambda$ is a spectrally simple zero of the characteristic polynomial $\chi_A(u)$, the matrix $\lambda\mI-A$ is of rank one and thus, by applying Theorem \ref{row reduction I} and multiplying rows by element inverses, can be placed into the form 
\begin{equation*}
\rho(\lambda\mI-A)=\begin{pmatrix}
1&0 &\cdots&0&{\eta_1}&0&0&\cdots&0\\
0&1& \cdots&0&{\eta_2}&0&0&\cdots&0\\
0&0&\ddots&0&\vdots&0&0&\cdots&0\\
0&0&\cdots&1&{\eta_{k-1}}&0&0&\cdots&0\\
0&0&\cdots&0&\eta_k&0&0&\cdots&0\\
0&0&\cdots&0&{\eta_{k+1}}&1&0&\cdots&0\\
0&0&\cdots&0&{\eta_{k+2}}&0&1&\cdots&0\\
0&0&\cdots&0&\vdots&0&0&\ddots&0\\
0&0&\cdots&0&{\eta_{m}}&0&\cdots&0&1
\end{pmatrix},
\end{equation*}
where $\eta_1, \ldots, \eta_m\in\CZ^\circ$.  In particular, $\eta_k=\vert\rho(\lambda\mI-A)\vert=0$.  Solving the equation $(\lambda\mI-A)v=0$ now gives \begin{equation*}
v=\alpha\left(-{\eta_1},\cdots,-{\eta_{k-1}},1,-{\eta_{k+1}},\cdots,{\eta_m}\right)^\intercal,
\end{equation*}
satisfying $Av=\lambda v$ for any $\alpha\in\CZ$.

\end{proof}

\begin{corollary}
Let $A\in\Mat(m;\CZ)$.  If $\lambda$ is a zeon eigenvalue of $A$ associated with zeon eigenvector $\xi$,  then $\zco \lambda$ is an eigenvalue of $\zco A$ associated with eigenvector $\zco \xi$.
\end{corollary}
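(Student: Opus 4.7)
The strategy is straightforward: apply the scalar-part map $\zco$ to the eigenvalue equation $A\xi = \lambda \xi$ and exploit the fact that $\zco$ is a ring homomorphism, then verify that the resulting vector $\zco \xi$ is genuinely nonzero (and so a bona fide eigenvector of $\zco A$).

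First I would extend Proposition \ref{complex endo} from the matrix algebra setting to matrix-vector products. Since $\zco$ is a homomorphism on $\CZ_n$ (in particular $\zco(uv)=(\zco u)(\zco v)$ and $\zco(u+v)=\zco u+\zco v$), the entrywise application to a vector $\xi=(\xi_1,\ldots,\xi_m)^\intercal\in{\CZ_n}^m$ satisfies
\begin{equation*}
\zco(A\xi)_j = \zco\!\left(\sum_{\ell=1}^m a_{j\ell}\xi_\ell\right) = \sum_{\ell=1}^m(\zco a_{j\ell})(\zco\xi_\ell) = \bigl((\zco A)(\zco\xi)\bigr)_j,
\end{equation*}
so $\zco(A\xi)=(\zco A)(\zco \xi)$. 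Likewise, $\zco(\lambda \xi)=(\zco\lambda)(\zco\xi)$ by the same multiplicativity. Applying $\zco$ to $A\xi=\lambda\xi$ then yields $(\zco A)(\zco\xi)=(\zco\lambda)(\zco\xi)$.

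The second (and only subtle) step is to verify that $\zco\xi\ne \mathbf{0}$ in $\mathbb{C}^m$, for otherwise the equation $(\zco A)(\zco\xi)=(\zco\lambda)(\zco\xi)$ is vacuous. This is where condition (ii) of the definition of zeon eigenvector enters: since $\xi\notin({\CZ_n}^\circ)^m$, some component $\xi_k$ is invertible, hence $\zco\xi_k\ne 0$ by Proposition \ref{zeon inverses}, and so $\zco\xi\ne\mathbf{0}$. Therefore $\zco\xi$ is a genuine (complex) eigenvector of $\zco A$ with eigenvalue $\zco\lambda$.

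No step here presents a real obstacle; the one thing to take care with is distinguishing between $\zco$ as defined on individual zeons versus its entrywise extension to vectors and matrices, and checking that the extension remains multiplicative with respect to matrix-vector action. Once that is in place, the nondegeneracy clause in the eigenvector definition does exactly the work needed to rule out the trivial conclusion $\zco\xi=\mathbf{0}$.
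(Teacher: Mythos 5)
Your proof is correct and follows essentially the same route as the paper: both arguments amount to taking the scalar part of $A\xi=\lambda\xi$ and observing that the cross terms involving $\zdu A$, $\zdu\lambda$, or $\zdu\xi$ contribute nothing to the scalar part, yielding $\zco A\,\zco\xi=\zco\lambda\,\zco\xi$. Your explicit verification that $\zco\xi\ne\mathbf{0}$ via condition (ii) of the eigenvector definition is a worthwhile extra step that the paper leaves implicit, but it does not change the substance of the argument.
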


\begin{proof}
Expanding $A\xi=\lambda\xi$ as 
\begin{equation*}
(\zco A+\zdu A)(\zco \xi+\zdu \xi)=(\zco \lambda +\zdu \lambda)(\zco \xi +\zdu \xi),
\end{equation*}
we see that \begin{equation*}
\zco A\zco \xi+\zdu A\zco \xi+\zco A\zdu \xi+\zdu A\zdu \xi=\zco \lambda \zco \xi+\zdu \lambda \zco \xi+\zco \lambda\zdu \xi+\zdu \lambda\zdu \xi.
\end{equation*}
Since ${\CZ}^\circ$ is an ideal, it is easy to see that $\zco A\zco \xi$ and $\zco \lambda \zco \xi$ are the only terms of the equation that exist in $\mathbb{C}^m$.  All other terms are elements of $({\CZ}^\circ)^m$.  Hence, the result: $\zco A\zco \xi=\zco \lambda \zco \xi$.
\end{proof}

\begin{example}
Consider the matrix $A$ given by \begin{equation*}
A=\left(
\begin{array}{ccc}
 2+\zeta _{\{2\}} & \zeta _{\{3\}} & -\zeta _{\{1\}} \\
 \zeta _{\{3\}} & 3+\zeta _{\{1,2\}} & 4 \\
 -\zeta _{\{1\}} & 2 & 1 \\
\end{array}
\right).
\end{equation*}
The eigenvalues of $A$ are \begin{align*}
\lambda_1&=5+\frac{2}{3} \zeta _{\{1,2\}}-\frac{1}{3} \zeta _{\{1,3\}}-\frac{1}{9} \zeta _{\{1,2,3\}},\\
\lambda_2&=2+\frac{2}{3} \zeta _{\{1,3\}}+\zeta _{\{2\}}, { \rm and}\\
\lambda_3&=-1+\frac{1}{3} \zeta _{\{1,2\}}-\frac{1}{3} \zeta _{\{1,3\}}+\frac{1}{9} \zeta _{\{1,2,3\}}.
\end{align*}
Respective eigenvectors $v_1$, $v_2$, and $v_3$ associated $\lambda_1$, $\lambda_2$, and $\lambda_3$ are 
 \begin{align*}
 v_1&=\left(
\begin{array}{c}
 -\frac{1}{9} \zeta _{\{1,2\}}+\frac{2}{9} \zeta _{\{2,3\}}-\frac{1}{27} \zeta _{\{1,2,3\}}-\frac{\zeta _{\{1\}}}{3}+\frac{2 \zeta _{\{3\}}}{3} \vspace{3pt}\\
 2+\frac{1}{3} \zeta _{\{1,2\}}+\frac{1}{6} \zeta _{\{1,3\}}+\frac{1}{18} \zeta _{\{1,2,3\}}\vspace{3pt}\\
 1 \\
\end{array}
\right),\\
v_2&=\left(
\begin{array}{c}
 1 \\
 -\frac{1}{9} \zeta _{\{2,3\}}+\frac{1}{81} \zeta _{\{1,2,3\}}+\frac{4 \zeta _{\{1\}}}{9}-\frac{\zeta _{\{3\}}}{9}\vspace{3pt} \\
 \frac{1}{9} \zeta _{\{1,2\}}+\frac{2}{81} \zeta _{\{1,2,3\}}-\frac{\zeta _{\{1\}}}{9}-\frac{2 \zeta _{\{3\}}}{9}\vspace{3pt} \\
\end{array}
\right),\text{ \rm and}\\
v_3&=\left(
\begin{array}{c}
 -\frac{1}{9} \zeta _{\{1,2\}}-\frac{1}{9} \zeta _{\{2,3\}}-\frac{1}{54} \zeta _{\{1,2,3\}}+\frac{\zeta _{\{1\}}}{3}+\frac{\zeta _{\{3\}}}{3} \vspace{3pt}\\
 \frac{1}{6} \zeta _{\{1,2\}}-1 \vspace{3pt}\\
 1 \\
\end{array}
\right). 
\end{align*}
\end{example}

\section{The Zeon Spectral Theorem}\label{zeon spectral theorem}

In this section, linear independence of eigenvectors associated with distinct zeon eigenvalues is established and utilized to construct rank-one zeon projection operators.  These operators are used to construct a resolution of the identity and, ultimately, to establish a spectral theorem for zeon matrices. 

\begin{proposition}
Let $A\in \Mat(m;\CZ)$.  Let $v_1,v_2\in{\CZ}^m$ be zeon eigenvectors of $A$ associated with zeon eigenvalues $\lambda_1, \lambda_2$, respectively, where $\zco \lambda_1\ne \zco \lambda_2$.  Then, $v_1$ and $v_2$ are linearly independent.
\end{proposition}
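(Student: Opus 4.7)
The plan is to mimic the classical proof that eigenvectors associated with distinct eigenvalues are linearly independent, adapting each step to the zeon module setting. The crucial enabling facts are that a zeon element is invertible iff its scalar part is nonzero (Proposition \ref{zeon inverses}), so $\lambda_2-\lambda_1$ will be invertible in $\CZ_n$, and that a zeon vector with at least one invertible component can cancel scalar coefficients.

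First, I would suppose there exist $\alpha_1,\alpha_2\in\CZ_n$ with
\begin{equation*}
\alpha_1 v_1+\alpha_2 v_2=\zm,
\end{equation*}
and apply $A$ to both sides, using $Av_j=\lambda_j v_j$ to obtain $\alpha_1\lambda_1 v_1+\alpha_2\lambda_2 v_2=\zm$. Then I would multiply the original dependency by $\lambda_1$ and subtract to eliminate the $v_1$ term, yielding
\begin{equation*}
\alpha_2(\lambda_2-\lambda_1)v_2=\zm.
\end{equation*}

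Next I would observe that $\zco(\lambda_2-\lambda_1)=\zco\lambda_2-\zco\lambda_1\ne 0$ by hypothesis, so Proposition \ref{zeon inverses} guarantees that $\lambda_2-\lambda_1\in{\CZ_n}^\times$. Multiplying the preceding equation componentwise by $(\lambda_2-\lambda_1)^{-1}$ gives $\alpha_2 v_2=\zm$. Since $v_2$ is a zeon eigenvector, by definition $v_2\notin({\CZ_n}^\circ)^m$, so some component $(v_2)_{k_0}$ of $v_2$ is invertible in $\CZ_n$. Reading off the $k_0$th coordinate of $\alpha_2 v_2=\zm$ yields $\alpha_2(v_2)_{k_0}=0$, and multiplying by $(v_2)_{k_0}^{-1}$ gives $\alpha_2=0$. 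Substituting back into the original dependency gives $\alpha_1 v_1=\zm$, and the identical argument using an invertible component of $v_1$ forces $\alpha_1=0$.

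The only subtle step is the last one: over a field one would simply conclude $\alpha_2=0$ from $\alpha_2 v_2=\zm$ and $v_2\ne\zm$, but over $\CZ_n$ this fails in general (for instance if every component of $v_2$ were nilpotent, a nonzero nilpotent $\alpha_2$ could annihilate it). The hypothesis that $v_2$ is a zeon eigenvector, which builds in the condition $v_2\notin({\CZ_n}^\circ)^m$, is precisely what rescues the argument; this is the main place where the proof genuinely uses the zeon-module subtleties rather than just transcribing the field proof verbatim.
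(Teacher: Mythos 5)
Your proof is correct and follows essentially the same route as the paper's: apply $A$ to the dependency, multiply the original relation by $\lambda_1$, subtract to get $\alpha_2(\lambda_2-\lambda_1)v_2=\zm$, and use the invertibility of $\lambda_2-\lambda_1$ (guaranteed by $\zco\lambda_1\ne\zco\lambda_2$). The only difference is cosmetic: the paper argues by contradiction and concludes $v_1,v_2\in({\CZ_n}^\circ)^m$ contradicts the eigenvector definition, whereas you argue directly and spell out the final cancellation $\alpha_2(v_2)_{k_0}=0\Rightarrow\alpha_2=0$ via an invertible component --- a step the paper leaves implicit.
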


\begin{proof}
Suppose, to the contrary, that for some nonzero constants $\alpha_1, \alpha_2\in\CZ$, the following holds:\begin{align}
\zm&=\alpha_1v_1+\alpha_2v_2\nonumber\\
&=(\alpha_1v_1+\alpha_2v_2)\lambda_1.\label{orth1}
\end{align}
Further, linearity of $A$ implies
\begin{align}
\zm&=A(\alpha_1v_1+\alpha_2v_2)\nonumber\\
&=\alpha_1\lambda_1v_2+\alpha_2\lambda_2v_2.\label{orth2}
\end{align}
Subtracting \eqref{orth2} from \eqref{orth1} gives
\begin{align*}
\zm&=\alpha_2(\lambda_2-\lambda_1)v_2.
\end{align*}
Since, $\zco\lambda_1\ne\zco\lambda_2$, we conclude that $\lambda_2-\lambda_1$ is invertible.  Hence, $\alpha_2 v_2=\zm$, which further implies $\alpha_1v_1=\zm$ by \eqref{orth1} so that $v_1,v_2\in({\CZ}^\circ)^m$, a contradiction.   
\end{proof}

\begin{remark}
Note that the only zeon eigenvalue that can be associated with the nullspace of matrix $A\in\Mat(m;\CZ)$ is zero.  If $\lambda v=0$ for nonzero $\lambda$, then $\lambda$ is nilpotent and $v\in({\CZ}^\circ)^m$, violating the requirement that an individual eigenvector be linearly independent.  
\end{remark}

The zeon eigenvalues of a matrix are restricted to spectrally simple zeros of the matrix's characteristic polynomial.  Suppose $v$ is an eigenvector of $X$ associated with distinct eigenvalues $\lambda_1, \lambda_2$ satisfying $\zco\lambda_1=\zco\lambda_2$.  Then, 
\begin{align*}
Xv=\lambda_1v&=\lambda_2 v\\
&\Rightarrow (\lambda_2-\lambda_1)v=\zm,
\end{align*}

\subsection{The Inner Product on $\CZ^m$}\label{zeon inner product}

Let $x,y\in{\CZ}^m$.  Writing $x$ and $y$ as column matrices $y=(y_1, \ldots, y_m)^\intercal$ and \hfill\break $x=(x_1, \ldots, x_m)^\intercal$, the {\em zeon module inner product} of $x$ and $y$ is defined by
\begin{equation}\label{inner prod def}
\langle x,y\rangle = y^\dag x,
\end{equation}
where $y^\dag=\overline{y}^\intercal$ denotes the complex conjugate transpose of $y$.

Some basic properties are established in the next lemma, the proof of which is straightforward.

\begin{lemma}
Let  $x,y,z\in{\CZ}^m$ and let $\alpha\in\CZ$.  Then,
\begin{align}
\langle \alpha x+y, z\rangle &= \alpha \langle x, z\rangle+ \langle y,z\rangle\label{firstlinearity}\\  
\langle x,y\rangle &= \overline{\langle y, x\rangle}\\  
\zco \langle x,y\rangle &=\langle \zco x, \zco y\rangle\\
\zco\langle x,x\rangle &\ge 0\\
\zco\langle x,x\rangle &= 0 \text{ \rm iff }x\in({\CZ}^\circ)^m.
\end{align} 
\end{lemma}

The quantity $\langle x,x\rangle$ is generally not scalar-valued, so it does not define a norm.  However, the scalar part of $\langle x,x\rangle$ defines a seminorm.   Since $\zco \langle x,x\rangle\ge 0$ for all $x\in{\CZ}^m$, we define the {\em spectral seminorm} of $x$ by \begin{equation*}
|x|_\star=\zco\langle x, x\rangle^{1/2}.
\end{equation*}

\begin{definition}
A zeon vector $v\in{\CZ}^m$ is said to be {\em null} if its spectral seminorm is zero, i.e., $\zco\langle v, v\rangle = 0$, or equivalently, $\langle v,v\rangle\in{\CZ}^\circ$.  A non-null zeon vector $v$ is said to be {\em normalized} if and only if $\langle v,v\rangle =1$.  
\end{definition}

Any non-null $x\in{\CZ}^m$ can be {\em normalized} via the mapping $x\mapsto \hat{x}$ where \begin{equation*}
\hat{x}=(\langle x, x\rangle^{-1})^{1/2} x.
\end{equation*}
It is not difficult to verify that $\langle \hat{x}, \hat{x}\rangle=1$ whenever $\zco \langle x,x\rangle\ne 0$.

\begin{example}
Consider the following elements of ${\CZ}^3$:
\begin{align*}
v_1=\begin{pmatrix} i+\zeta _{\{1\}}\\
 \zeta _{\{2\}}-\zeta _{\{2,3\}}\\
 2-\zeta _{\{1,2,3\}}
 \end{pmatrix}; &&
v_2=\begin{pmatrix}
 \zeta _{\{1,3\}}\\
 \zeta _{\{2\}}\\
 \frac{i}{2}  \zeta _{\{1,3\}}\end{pmatrix}.
\end{align*}

Direct calculation shows that \begin{align*}
\langle v_1,v_1\rangle&=5-4 \zeta _{\{1,2,3\}}\\
\langle v_1,v_2\rangle&=0\\
\langle v_2,v_2\rangle&=0.
\end{align*}

While $v_2$ is null (and can't be normalized), $v_1$ can be normalized by mapping \begin{equation*}
v_1\mapsto w_1=\langle v_1, v_1\rangle^{-1/2} v_1,
\end{equation*}
where \begin{equation*}
\langle v_1,v_1\rangle^{-1/2}=\frac{1}{\sqrt{5}}+\frac{2}{5 \sqrt{5}}\zeta _{\{1,2,3\}}.
\end{equation*}
In this case, the normalized vector $w_1$ is \begin{equation*}
w_1=\left(
\begin{array}{c}
\frac{i}{\sqrt{5}}+\frac{1}{\sqrt{5}}\zeta _{\{1\}}+ \frac{2 i}{5 \sqrt{5}}\zeta _{\{1,2,3\}}\\
 \frac{1}{\sqrt{5}}\zeta _{\{2\}}-\frac{1}{\sqrt{5}}\zeta _{\{2,3\}} \\
 \frac{2}{\sqrt{5}}-\frac{1}{5 \sqrt{5}}\zeta _{\{1,2,3\}} \\
\end{array}
\right).
\end{equation*}
Finally, direct computation verifies that $\langle w_1, w_1\rangle  =1$.
\end{example}

\subsection*{Resolution of the Identity}

Two vectors $v_1,v_2\in{\CZ}^m$ are said to be {\em orthogonal} if and only if $\langle v_1, v_2\rangle=0$.   Note that any collection of zeon vectors $\{v_1, \ldots, v_m\}$ spanning ${\CZ}^m$ can be orthogonalized via Gaussian elimination.  Normalizing then yields an orthonormal basis for ${\CZ}^m$.

\begin{lemma}
Let $v\in{\CZ}^m$ be a normalized zeon vector.  The matrix $vv^\dag$ represents orthogonal projection onto $\spn(\{v\})$.
\end{lemma}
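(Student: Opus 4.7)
The plan is to verify the three standard characterizing properties of an orthogonal projection: idempotence, self-adjointness, and that the range is exactly $\spn(\{v\})$ with the complementary direction orthogonal to $v$ under the zeon inner product. Each step reduces to a one-line calculation resting on associativity of matrix multiplication together with the normalization identity $v^\dag v = \langle v, v\rangle = 1$.

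Set $P := vv^\dag \in \Mat(m, \CZ_n)$. First, I would describe the range and fixed points. For any $w \in {\CZ_n}^m$, associativity gives $Pw = v(v^\dag w)$, and since $v^\dag w = \langle w, v\rangle \in \CZ_n$, the output $Pw = \langle w, v\rangle\, v$ is a zeon multiple of $v$; hence $\mathrm{range}(P) \subseteq \spn(\{v\})$. Conversely, for any $\alpha \in \CZ_n$, one computes $P(\alpha v) = v(v^\dag v)\alpha = \alpha v$ by normalization, so $P$ fixes $\spn(\{v\})$ pointwise and $\mathrm{range}(P) = \spn(\{v\})$. Idempotence then follows at once from $P^2 = (vv^\dag)(vv^\dag) = v(v^\dag v)v^\dag = vv^\dag = P$, and self-adjointness from $P^\dag = (vv^\dag)^\dag = (v^\dag)^\dag v^\dag = vv^\dag = P$.

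To confirm \emph{orthogonality} (as opposed to an oblique projection), I would check that $w - Pw \perp v$ for every $w \in {\CZ_n}^m$. This reduces to $\langle w - Pw, v\rangle = v^\dag w - v^\dag v\, v^\dag w = v^\dag w - v^\dag w = 0$, again invoking $v^\dag v = 1$. Combined with the preceding observations, this yields the canonical decomposition $w = Pw + (w - Pw)$ with $Pw \in \spn(\{v\})$ and $w - Pw$ orthogonal to $\spn(\{v\})$, which is the defining feature of orthogonal projection.

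There is no substantive obstacle: every step collapses to the normalization $v^\dag v = 1$ and to the associativity and commutativity of multiplication, which hold for matrices over the commutative ring $\CZ_n$ just as they do over $\mathbb{C}$. The only subtlety worth flagging is that $\langle \cdot, \cdot \rangle$ is a seminorm rather than a norm (null vectors exist in ${\CZ_n}^m$), so the classical Hilbert-space decomposition theory does not transfer globally; nevertheless, the algebraic content used here — idempotence, self-adjointness, and the vanishing of $\langle w - Pw, v\rangle$ — carries over without modification.
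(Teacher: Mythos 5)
Your proof is correct, but it takes a genuinely different route from the paper. The paper extends $v$ to an orthonormal basis $\{v, u_1,\ldots,u_{m-1}\}$ of ${\CZ_n}^m$, expands an arbitrary $x$ as $\alpha_0 v + \sum_j \alpha_j u_j$, and checks that $vv^\dag$ annihilates each $u_j$ while fixing the $v$-component, returning $\alpha_0 v$. You instead verify the characterizing properties of an orthogonal projection directly --- range equal to $\spn(\{v\})$ via $Pw = \langle w,v\rangle v$, pointwise fixing of that span, idempotence, self-adjointness, and orthogonality of $w - Pw$ to $v$ --- with every step collapsing to $v^\dag v = 1$. Your approach buys something real here: the paper's argument silently assumes that a single normalized vector can be completed to an orthonormal basis of ${\CZ_n}^m$, which (as you rightly note) is delicate in a module where the inner product is only a seminorm and null vectors exist; the paper's earlier remark about orthogonalizing spanning sets via Gaussian elimination does not by itself supply the needed $\{u_1,\ldots,u_{m-1}\}$. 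Your argument sidesteps that existence question entirely and is correspondingly more self-contained. What the paper's approach buys in exchange is the explicit coordinate picture $x \mapsto \alpha_0 v$ relative to an orthonormal basis, which feeds directly into the resolution-of-the-identity statement that follows the lemma.
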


\begin{proof}
Given normalized $v$, let $\{u_1, \ldots u_{m-1}\}$ be an orthonormalized collection of zeon vectors orthogonal to $v$.  Letting $x\in{\CZ}^m$ be arbitrary, there exist zeon coefficients $\alpha_0, \ldots, \alpha_{m-1}$ such that \begin{equation*}
x=\alpha_0 v+\alpha_1 u_1+\cdots+\alpha_{m-1}u_{m-1}.
\end{equation*}
It follows that \begin{align*}
(vv^\dag)x&=vv^\dag(\alpha_0 v+\alpha_1 u_1+\cdots+\alpha_{m-1}u_{m-1})\\
&=v \alpha_0 v^\dag v+\alpha_1 v^\dag u_1+\cdots+\alpha_{m-1}v^\dag u_{m-1}\\
&=\alpha_0 \langle v,v\rangle v+\alpha_1\langle u_1,v \rangle v+\cdots+\alpha_{m-1}\langle u_{m-1},v\rangle v\\
&=\alpha_0 v.
\end{align*} 
\end{proof}

It follows that when $\{u_1, \ldots, u_m\}$ is an orthonormalized collection of zeon vectors, a {\em resolution of the identity} is given by \begin{equation*}
\mI=\bigoplus_{j=1}^m u_j{u_j}^\dag.
\end{equation*}

A matrix $A\in\Mat(m;\CZ)$ satisfying $A^\dag=A$ is clearly self-adjoint w.r.t. the inner product \eqref{inner prod def} since \begin{align*}
\langle Ax, y\rangle=y^\dag (Ax)=y^\dag(x^\dag A^\dag)^\dag&=y^\dag(x^\dag A)^\dag\\
&=y^\dag A^\dag x=(Ay)^\dag x=\langle x, Ay\rangle
\end{align*}
for all $x,y\in{\CZ}^m$. 

Eigenvalues of self-adjoint zeon matrices are elements of the real zeon algebra $\RZ$; i.e., $\chi_A(\lambda)=0$ implies $\lambda=\overline{\lambda}$ when $A$ is self-adjoint.   Further, eigenvectors of self-adjoint zeon matrices associated with distinct eigenvalues are orthogonal.  Given distinct eigenvalues $\lambda_1$ and $\lambda_2$ associated with eigenvectors $v_1$ and $v_2$, respectively, 
\begin{align*}
\lambda_1\langle v_1, v_2\rangle&=\langle Av_1, v_2\rangle\\
&=\langle v_1, A v_2\rangle\\
&=\lambda_2 \langle v_1, v_2\rangle
\end{align*}
implies $\langle v_1, v_2\rangle=0$.

\begin{theorem}[Zeon Spectral Theorem]\label{strong spectral theorem}
Let $A\in\Mat(m;\CZ)$ be a self-adjoint zeon matrix with $m$ spectrally simple eigenvalues.  Let $v_1, \ldots, v_m$ denote normalized zeon eigenvectors associated with these eigenvalues and set $\pi_j=v_j{v_j}^\dag$ for $j=1, \ldots, m$.  Then, \begin{equation*}
A=\bigoplus_{j=1}^m \lambda_j\pi_j.
\end{equation*}
\end{theorem}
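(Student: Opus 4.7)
The plan is to exhibit $\{v_1,\ldots,v_m\}$ as an orthonormal basis of the free module $\CZ_n^m$ and then invoke the resolution of the identity $\mI=\bigoplus_{j=1}^m v_j v_j^\dag$ established immediately before the theorem. The decomposition of $A$ will then drop out by applying $A$ to both sides.

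First I would verify that $\{v_1,\ldots,v_m\}$ is an orthonormal collection. Because $A$ has $m$ spectrally simple eigenvalues, the scalar parts $\zco\lambda_1,\ldots,\zco\lambda_m$ are $m$ simple and hence pairwise distinct eigenvalues of the complex self-adjoint matrix $\zco A$. For $j\ne k$ the self-adjoint identity recorded just above the theorem,
\begin{equation*}
\lambda_j\langle v_j,v_k\rangle = \langle Av_j,v_k\rangle = \langle v_j,Av_k\rangle = \lambda_k\langle v_j,v_k\rangle,
\end{equation*}
gives $(\lambda_k-\lambda_j)\langle v_j,v_k\rangle=0$. Since $\zco(\lambda_k-\lambda_j)\ne 0$, the factor $\lambda_k-\lambda_j$ is invertible in $\CZ_n$ by Proposition \ref{zeon inverses}, forcing $\langle v_j,v_k\rangle=0$. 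Together with the normalization $\langle v_j,v_j\rangle=1$, this yields orthonormality.

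Next I would argue that this orthonormal collection actually spans $\CZ_n^m$. Collect the eigenvectors as the columns of a matrix $V\in\Mat(m,\CZ_n)$. Its scalar part $\zco V$ has columns $\zco v_1,\ldots,\zco v_m$, which are normalized eigenvectors of the complex self-adjoint matrix $\zco A$ associated with $m$ distinct eigenvalues, hence form an orthonormal basis of $\mathbb{C}^m$ by the classical spectral theorem. Thus $\zco V$ is unitary, and in particular invertible over $\mathbb{C}$, so by Proposition \ref{zeon matrix inverse} the matrix $V$ is invertible in $\Mat(m,\CZ_n)$ and its columns form a basis of the free module $\CZ_n^m$.

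With $\{v_1,\ldots,v_m\}$ an orthonormal basis, the resolution of the identity gives $\mI=\bigoplus_{j=1}^m \pi_j$ with $\pi_j=v_j v_j^\dag$. The conclusion then follows from the direct computation
\begin{equation*}
A = A\mI = \bigoplus_{j=1}^m A\pi_j = \bigoplus_{j=1}^m (Av_j)\,v_j^\dag = \bigoplus_{j=1}^m \lambda_j\,v_j v_j^\dag = \bigoplus_{j=1}^m \lambda_j\pi_j,
\end{equation*}
using $Av_j=\lambda_j v_j$ in the second-to-last step. The step I expect to be the main obstacle is the spanning claim: in a general module over a commutative ring, an orthonormal set of $m$ elements need not be a basis. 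What makes the argument go through in this setting is the interplay between the scalar-part algebra homomorphism of Proposition \ref{complex endo} and the invertibility criterion of Proposition \ref{zeon matrix inverse}, which together let us transfer unitarity of $\zco V$ in $\mathbb{C}^m$ back up to invertibility of $V$ over $\CZ_n$.
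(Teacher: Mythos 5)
Your proof is correct and follows essentially the same route as the paper's: orthonormality of the eigenvectors plus a resolution of the identity, followed by applying $A$. The only place you go beyond the paper is in justifying that the orthonormal eigenvectors actually span ${\CZ_n}^m$ (by passing to $\zco V$, invoking the classical spectral theorem, and lifting invertibility back via Proposition~\ref{zeon matrix inverse}); the paper simply asserts this spanning claim, and your added detail is sound.
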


\begin{proof}
Assuming that $A$ is self-adjoint with spectrally simple eigenvalues, it follows that the corresponding normalized eigenvectors $\{v_1, \ldots, v_m\}$ are orthogonal, resulting in an orthonormal zeon basis of ${\CZ}^m$.  For arbitrary $z\in {\CZ}^m$, it follows that \begin{align*}
Az=A\sum_{j=1}^m \langle z,v_j\rangle v_j=\sum_{j=1}^m \lambda_j \langle z,v_j\rangle v_j&=\sum_{j=1}^m \lambda_j \pi_j z\\
&=\left(\bigoplus_{j=1}^m \lambda_j \pi_j\right)z.
\end{align*}
\end{proof}

\begin{example}\label{spectral decomp example}
Consider the self-adjoint zeon matrix
\begin{equation*}
A=\left(
\begin{array}{ccc}
 5+\zeta _{\{2\}} & \zeta _{\{3\}} & -\zeta _{\{1\}} \\
 \zeta _{\{3\}} & 6+\zeta _{\{1,2\}} & 4 \\
 -\zeta _{\{1\}} & 4 & 6 \\
\end{array}
\right).
\end{equation*}
The zeon eigenvalues of $A$ are \begin{align*}
\lambda_1&=10+\frac{1}{2} \zeta _{\{1,2\}}-\frac{1}{5} \zeta _{\{1,3\}}-\frac{1}{25} \zeta _{\{1,2,3\}},\\
\lambda_2&=5+\zeta _{\{2\}}+\frac{8}{15} \zeta _{\{1,3\}}-\frac{16}{225} \zeta _{\{1,2,3\}},\\
\lambda_3&=2+\frac{1}{2} \zeta _{\{1,2\}}-\frac{1}{3} \zeta _{\{1,3\}}+\frac{1}{9} \zeta _{\{1,2,3\}}.
\end{align*}

The matrices $\{\lambda_j\mI-A: j=1,2,3\}$  reduce via elementary row operations to the following: \begin{align*}
\lambda_1\mI-A&\mapsto\left(
\begin{array}{ccc}
 1 & 0 & \frac{1}{5}\zeta _{\{1\}}-\frac{1}{5}\zeta _{\{3\}} +\frac{1}{25} \zeta _{\{1,2\}}-\frac{1}{25} \zeta _{\{2,3\}}-\frac{1}{200} \zeta _{\{1,2,3\}}\vspace{3pt}\\
 0 & 1 & -1-\frac{1}{8} \zeta _{\{1,2\}}\vspace{3pt} \\
 0 & 0 & 0 \\
\end{array}
\right),\\
\lambda_2\mI-A&\mapsto\left(
\begin{array}{ccc}
 -\frac{4}{15} \zeta _{\{1\}}-\frac{1}{15}\zeta _{\{3\}}+\frac{8}{225} \zeta _{\{1,2\}}+\frac{17}{225} \zeta _{\{2,3\}}-\frac{1}{225} \zeta _{\{1,2,3\}} & 1 & 0 \vspace{3pt}\\
\frac{1}{15}\zeta _{\{1\}}+\frac{4}{15} \zeta _{\{3\}}  -\frac{17}{225} \zeta _{\{1,2\}}-\frac{8}{225} \zeta _{\{2,3\}}+\frac{4}{225} \zeta _{\{1,2,3\}}& 0 & 1 \vspace{3pt}\\
 0 & 0 & 0 
\end{array}
\right),\\
\lambda_3\mI-A&\mapsto\left(
\begin{array}{ccc}
 1 & \frac{1}{3}\zeta _{\{1\}}+\frac{1}{3}\zeta _{\{3\}}-\frac{1}{9} \zeta _{\{1,2\}}-\frac{1}{9} \zeta _{\{2,3\}}+\frac{1}{18} \zeta _{\{1,2,3\}} & 0 \vspace{3pt}\\
 0 & 1+\frac{1}{8} \zeta _{\{1,2\}}& 1 \vspace{3pt}\\
 0 & 0 & 0 \\
\end{array}
\right),
\end{align*}
giving us the corresponding (non-normalized) zeon eigenvectors:  
\begin{align*}
v_1&=\left(
\begin{array}{c}
 -\frac{1}{5}\zeta _{\{1\}}+\frac{1}{5}\zeta _{\{3\}}-\frac{1}{25} \zeta _{\{1,2\}}+\frac{1}{25} \zeta _{\{2,3\}}+\frac{1}{200} \zeta _{\{1,2,3\}} \vspace{3pt}\\
1+ \frac{1}{8} \zeta _{\{1,2\}}\vspace{3pt}\\
 1 \\
\end{array}
\right),\\
v_2&=\left(
\begin{array}{c}
 1 \\
 \vspace{3pt}
 \frac{4}{15} \zeta _{\{1\}}+\frac{1}{15}\zeta _{\{3\}} -\frac{8}{225} \zeta _{\{1,2\}}-\frac{17}{225} \zeta _{\{2,3\}}+\frac{1}{225} \zeta _{\{1,2,3\}}\vspace{3pt}\\ 
 -\frac{1}{15}\zeta _{\{1\}}-\frac{4}{15} \zeta _{\{3\}}+\frac{17}{225} \zeta _{\{1,2\}}+\frac{8}{225} \zeta _{\{2,3\}}-\frac{4}{225} \zeta _{\{1,2,3\}} \vspace{3pt}\\
\end{array}
\right),\text{ \rm and}\\
v_3&=\left(
\begin{array}{c}
 -\frac{1}{3}\zeta _{\{1\}}-\frac{1}{3}\zeta _{\{3\}}+\frac{1}{9} \zeta _{\{1,2\}}+\frac{1}{9} \zeta _{\{2,3\}}-\frac{1}{18} \zeta _{\{1,2,3\}} \vspace{3pt}\\
 1 \vspace{3pt}\\
-1 -\frac{1}{8} \zeta _{\{1,2\}} \\
\end{array}
\right).
\end{align*}

The normalized zeon eigenvectors $\hat{v_j}=\langle v_j, v_j\rangle^{-1/2}v_j$ are \begin{align*}
\hat{v_1}&=\left(
\begin{array}{c}
-\frac{1}{5 \sqrt{2}}\zeta _{\{1\}}+\frac{1}{5 \sqrt{2}}\zeta _{\{3\}} + \frac{1}{25 \sqrt{2}}\zeta _{\{2,3\}}-\frac{1}{25 \sqrt{2}}\zeta _{\{1,2\}}-\frac{3}{400 \sqrt{2}}\zeta _{\{1,2,3\}}\vspace{3pt}\\
 \frac{1}{\sqrt{2}}+\frac{1}{16 \sqrt{2}}\zeta _{\{1,2\}}+\frac{1}{50 \sqrt{2}}\zeta _{\{1,3\}}+\frac{1}{125 \sqrt{2}}\zeta _{\{1,2,3\}} \vspace{3pt}\\
\frac{1}{\sqrt{2}} -\frac{1}{16 \sqrt{2}}\zeta _{\{1,2\}}+\frac{1}{50 \sqrt{2}}\zeta _{\{1,3\}}+\frac{1}{125 \sqrt{2}}\zeta _{\{1,2,3\}} 
\end{array}
\right),\\
\hat{v_2}&=\left(
\begin{array}{c}
1 -\frac{8}{225}\zeta _{\{1,3\}}+\frac{152}{3375}\zeta _{\{1,2,3\}} \vspace{3pt}\\
 \frac{4}{15} \zeta _{\{1\}}+\frac{1}{15}\zeta _{\{3\}}-\frac{8}{225} \zeta _{\{1,2\}}-\frac{17}{225} \zeta _{\{2,3\}}+\frac{1}{225} \zeta _{\{1,2,3\}} \vspace{3pt}\\
 -\frac{1}{15}\zeta _{\{1\}}-\frac{4}{15} \zeta _{\{3\}}+\frac{17}{225} \zeta _{\{1,2\}}+\frac{8}{225} \zeta _{\{2,3\}}-\frac{4}{225} \zeta _{\{1,2,3\}} 
\end{array}
\right),\text{ \rm and}\\
\hat{v_3}&=\left(
\begin{array}{c}
 -\frac{1}{3 \sqrt{2}}\zeta _{\{1\}}-\frac{1}{3 \sqrt{2}}\zeta _{\{3\}}+\frac{1}{9 \sqrt{2}}\zeta _{\{1,2\}}+\frac{1}{9 \sqrt{2}}\zeta _{\{2,3\}}-\frac{5}{144 \sqrt{2}}\zeta _{\{1,2,3\}}\vspace{3pt} \\
 \frac{1}{\sqrt{2}}-\frac{1}{16 \sqrt{2}}\zeta _{\{1,2\}}-\frac{1}{18 \sqrt{2}}\zeta _{\{1,3\}} +\frac{1}{27 \sqrt{2}}\zeta _{\{1,2,3\}}\vspace{3pt}\\
 -\frac{1}{\sqrt{2}} -\frac{1}{16 \sqrt{2}}\zeta _{\{1,2\}}+\frac{1}{18 \sqrt{2}}\zeta _{\{1,3\}}-\frac{1}{27 \sqrt{2}}\zeta _{\{1,2,3\}}
\end{array}
\right).
\end{align*}

For $j=1,2,3$, the orthogonal zeon projections are determined by setting \begin{equation*}
\pi_j=\langle v_j,v_j\rangle^{-1} v_j{v_j}^\dag=\hat{v_j}\hat{v_j}^\dag.
\end{equation*}  
The resulting matrices are notationally cumbersome. The first one is presented here as  $\pi_1=(p_1\mid p_2\mid p_3)$, where 

\begin{align*}
p_1&=\left(
\begin{array}{c}
 -\frac{1}{25} \zeta _{\{1,3\}}-\frac{2}{125} \zeta _{\{1,2,3\}} \vspace{3pt}\\
 -\frac{1}{10}\zeta _{\{1\}}+\frac{1}{10}\zeta _{\{3\}}-\frac{1}{50} \zeta _{\{1,2\}}+\frac{1}{50} \zeta _{\{2,3\}}+\frac{1}{400} \zeta _{\{1,2,3\}} \vspace{3pt}\\
 -\frac{1}{10}\zeta _{\{1\}}+\frac{1}{10}\zeta _{\{3\}}-\frac{1}{50} \zeta _{\{1,2\}}+\frac{1}{50} \zeta _{\{2,3\}}-\frac{1}{100} \zeta _{\{1,2,3\}} 
\end{array}
\right),\\
p_2&=\left(
\begin{array}{c}
 -\frac{1}{10}\zeta _{\{1\}}+\frac{1}{10}\zeta _{\{3\}}-\frac{1}{50} \zeta _{\{1,2\}}+\frac{1}{50} \zeta _{\{2,3\}}+\frac{1}{400} \zeta _{\{1,2,3\}} \vspace{3pt}\\
 \frac{1}{2}+\frac{1}{16} \zeta _{\{1,2\}}+\frac{1}{50} \zeta _{\{1,3\}}+\frac{1}{125} \zeta _{\{1,2,3\}}\vspace{3pt} \\
 \frac{1}{2}+\frac{1}{50} \zeta _{\{1,3\}}+\frac{1}{125} \zeta _{\{1,2,3\}} \vspace{3pt}
\end{array}
\right),\text{ \rm and}\\
p_3&=\left(
\begin{array}{c}
-\frac{1}{10}\zeta _{\{1\}}+\frac{1}{10}\zeta _{\{3\}} -\frac{1}{50} \zeta _{\{1,2\}}+\frac{1}{50} \zeta _{\{2,3\}}-\frac{1}{100} \zeta _{\{1,2,3\}} \vspace{3pt}\\
 \frac{1}{2}+\frac{1}{50} \zeta _{\{1,3\}}+\frac{1}{125} \zeta _{\{1,2,3\}} \vspace{3pt}\\
\frac{1}{2}  -\frac{1}{16} \zeta _{\{1,2\}}+\frac{1}{50} \zeta _{\{1,3\}}+\frac{1}{125} \zeta _{\{1,2,3\}}
\end{array}
\right).
\end{align*}

The projection corresponding to eigenvector $v_2$ is $\pi_2=(q_1\vert q_2\vert q_3)$, where 
\begin{align*}
q_1&=\left(
\begin{array}{c}
 1-\frac{16}{225} \zeta _{\{1,3\}}+\frac{304}{3375} \zeta _{\{1,2,3\}}\vspace{3pt} \\
 \frac{4}{15} \zeta _{\{1\}}+\frac{1}{15}\zeta _{\{3\}}-\frac{8}{225} \zeta _{\{1,2\}}-\frac{17}{225} \zeta _{\{2,3\}}+\frac{1}{225} \zeta _{\{1,2,3\}} \vspace{3pt}\\
 -\frac{1}{15}\zeta _{\{1\}}-\frac{4}{15} \zeta _{\{3\}}+\frac{17}{225} \zeta _{\{1,2\}}+\frac{8}{225} \zeta _{\{2,3\}}-\frac{4}{225} \zeta _{\{1,2,3\}}\vspace{3pt} 
\end{array}
\right),\\
q_2&=\left(
\begin{array}{c}
\frac{4}{15} \zeta _{\{1\}}+\frac{1}{15}\zeta _{\{3\}} -\frac{8}{225} \zeta _{\{1,2\}}-\frac{17}{225} \zeta _{\{2,3\}}+\frac{1}{225} \zeta _{\{1,2,3\}} \vspace{3pt}\\
 \frac{8}{225} \zeta _{\{1,3\}}-\frac{152}{3375} \zeta _{\{1,2,3\}} \vspace{3pt}\\
 -\frac{17}{225} \zeta _{\{1,3\}} +\frac{98}{3375} \zeta _{\{1,2,3\}}
\end{array}
\right),\text{ \rm and}\\
q_3&=\left(
\begin{array}{c}
 -\frac{1}{15}\zeta _{\{1\}}-\frac{4}{15} \zeta _{\{3\}}+\frac{17}{225} \zeta _{\{1,2\}}+\frac{8}{225} \zeta _{\{2,3\}}-\frac{4}{225} \zeta _{\{1,2,3\}} \vspace{3pt}\\
 -\frac{17}{225} \zeta _{\{1,3\}}+\frac{98}{3375} \zeta _{\{1,2,3\}} \vspace{3pt}\\
 \frac{8}{225} \zeta _{\{1,3\}}-\frac{152}{3375} \zeta _{\{1,2,3\}} 
\end{array}
\right).
\end{align*}

Finally, the projection corresponding to eigenvector $v_3$ is $\pi_3=(s_1\vert s_2\vert s_3)$, where 
\begin{align*}
s_1&=\left(
\begin{array}{c}
 \frac{1}{9} \zeta _{\{1,3\}}-\frac{2}{27} \zeta _{\{1,2,3\}}\vspace{3pt} \\
 -\frac{1}{6}\zeta _{\{1\}}-\frac{1}{6}\zeta _{\{3\}}+\frac{1}{18} \zeta _{\{1,2\}}+\frac{1}{18} \zeta _{\{2,3\}}-\frac{1}{144} \zeta _{\{1,2,3\}} \vspace{3pt}\\
 \frac{1}{6}\zeta _{\{1\}}+\frac{1}{6}\zeta _{\{3\}}-\frac{1}{18} \zeta _{\{1,2\}}-\frac{1}{18} \zeta _{\{2,3\}}+\frac{1}{36} \zeta _{\{1,2,3\}} 
\end{array}
\right),\\
s_2&=\left(
\begin{array}{c}
 -\frac{1}{6}\zeta _{\{1\}}-\frac{1}{6}\zeta _{\{3\}}+\frac{1}{18} \zeta _{\{1,2\}}+\frac{1}{18} \zeta _{\{2,3\}}-\frac{1}{144} \zeta _{\{1,2,3\}} \vspace{3pt}\\
 \frac{1}{2}-\frac{1}{16} \zeta _{\{1,2\}}-\frac{1}{18} \zeta _{\{1,3\}}+\frac{1}{27} \zeta _{\{1,2,3\}} \vspace{3pt}\\
 -\frac{1}{2}+\frac{1}{18} \zeta _{\{1,3\}}-\frac{1}{27} \zeta _{\{1,2,3\}} 
\end{array}
\right),\text{ \rm and}\\
s_3&=\left(
\begin{array}{c}
\frac{1}{6}\zeta _{\{1\}}+\frac{1}{6}\zeta _{\{3\}} -\frac{1}{18} \zeta _{\{1,2\}}-\frac{1}{18} \zeta _{\{2,3\}}+\frac{1}{36} \zeta _{\{1,2,3\}} \vspace{3pt}\\
 -\frac{1}{2}+\frac{1}{18} \zeta _{\{1,3\}}-\frac{1}{27} \zeta _{\{1,2,3\}} \vspace{3pt}\\
\frac{1}{2}+ \frac{1}{16} \zeta _{\{1,2\}}-\frac{1}{18} \zeta _{\{1,3\}}+\frac{1}{27} \zeta _{\{1,2,3\}} 
\end{array}
\right).
\end{align*}

The following properties can  be verified by direct computation:
\begin{enumerate}[i.]
\item ${\pi_j}^2=\pi_j$ for $j=1, 2, 3$ (idempotent);
\item $\pi_j\pi_k=\pi_k\pi_j=\zm$ for $j\ne k$ (orthogonality);
\item $\pi_1+\pi_2+\pi_3=\mI_3$ (resolution of the identity); and 
\item $A=\lambda_1\pi_1+\lambda_2\pi_2+\lambda_3\pi_3$ (spectral decomposition).
\end{enumerate}
\end{example}

\section{Zeon Matrix Exponentials as Finite Sums}\label{zeon matrix exponential}

Spectrally simple zeon matrices lend themselves nicely to evaluation of matrix exponentials.  In this section, we consider a number of cases in which exponentials of zeon matrices can be computed as finite sums.  For clarity the extension of the exponential function to $\CZ$ is denoted by $\zexp:\CZ\to \CZ$, which extends in the expected way to matrices in $\Mat(m;\CZ)$.

As first seen in \cite{staplesweygandt}, the exponential of $\lambda=\zco\lambda+\zdu \lambda\in\CZ$ is computed by \begin{equation*}
\zexp(\lambda)=\sum_{\ell=0}^{\kappa(\zdu \lambda)-1}\frac{\exp(\zco \lambda)}{\ell!}(\zdu \lambda)^\ell
=\exp(\zco \lambda)\sum_{\ell=0}^{\kappa(\zdu\lambda)-1}\frac{(\zdu \lambda)^\ell}{\ell!}.
\end{equation*}

Given $A\in\Mat(m,\CZ)$, the exponential of $A$ is defined by   
\begin{equation}\label{matrix exp def}
\zexp(A)=\sum_{k=0}^\infty A^k/k!=\lim_{n\to\infty}\sum_{k=0}^n (\zco A+\zdu A)^k/k!,
\end{equation}
provided that the limit exists (relative to some norm\footnote{A detailed study of zeon matrix norms and norm inequalities is anticipated as a sequel to the current paper.} on $\Mat(m;\CZ)$).    

\subsection{Self-Adjoint Matrices}

If $A\in\Mat(m;\CZ)$ is spectrally simple and self-adjoint, the zeon spectral theorem implies \begin{align*}
\zexp(A)=\sum_{\ell=0}^{\infty}\frac{A^\ell}{\ell!}=\sum_{\ell=0}^{\infty}\frac{1}{\ell!}\left(\bigoplus_{j=1}^{m}\lambda_j\pi_j\right)^\ell&=\sum_{\ell=0}^{\infty}\frac{1}{\ell!}\bigoplus_{j=1}^{m}{\lambda_j}^\ell\pi_j\\
&=\bigoplus_{j=1}^m\pi_j\sum_{\ell=0}^\infty\frac{{\lambda_j}^\ell}{\ell!}=\bigoplus_{j=1}^m \zexp(\lambda_j)\pi_j,
\end{align*}
where $\pi_j$ is the rank-one projector onto the eigenspace of $\lambda_j$ for $j=1, \ldots, m$.

\begin{example}
Recalling the self-adjoint zeon matrix $A$ from Example \ref{spectral decomp example}, we see that \begin{align*}
\zexp(\lambda_1)&=e^{10}\left(1+\frac{1}{2} \zeta _{\{1,2\}}-\frac{1}{5} \zeta _{\{1,3\}}-\frac{1}{25} \zeta _{\{1,2,3\}}\right),\\
\zexp(\lambda_2)&=e^5\left(1+\zeta_{\{2\}}+\frac{8}{15}  \zeta _{\{1,3\}}+\frac{104}{225}  \zeta _{\{1,2,3\}}\right),\\
\zexp(\lambda_3)&=e^2\left(1+\frac{1}{2} \zeta _{\{1,2\}}-\frac{1}{3} \zeta _{\{1,3\}}+\frac{1}{9} \zeta _{\{1,2,3\}}\right).
\end{align*}
Now $\zexp(A)=\zexp(\lambda_1)\pi_1\oplus \zexp(\lambda_2)\pi_2\oplus \zexp(\lambda_3)\pi_3$, where $\pi_1$, $\pi_2$, and $\pi_3$ are the rank-one projections given in the example.

\end{example}
 
\subsection{Nilpotent Matrices}
When $A$ is nilpotent, the zeon matrix exponential reduces to a finite sum of the form
\begin{equation*}
\zexp(A)=\sum_{\ell=0}^{\kappa(A)}\frac{A^\ell}{\ell!}.
\end{equation*} 

Observing that $\zco (A^k)=(\zco A)^k$ for any $k\in \mathbb{N}$ and that $\zdu A$ is always nilpotent, it follows that $A$ is nilpotent if and only if $\zco A$ is nilpotent.

\subsubsection{Application: The nilpotent adjacency matrix of a graph}

Given a simple graph $G=(V,E)$ on vertices $V=\{1,\ldots, n\}$ and edges $E$ consisting of unordered pairs of vertices, the nilpotent adjacency matrix of $G$ is the $n\times n$ matrix $\Psi=(\psi_{ij})$ whose entries are defined by \begin{equation*}
\psi_{ij}=\begin{cases}
\zeta_{\{j\}}& \{i,j\}\in E,\\
0 & \{i,j\}\notin E.
\end{cases}
\end{equation*}
Note that because the maximum path length in a graph on $n$ vertices is $n$, the index of nilpotency of $\Psi$ is at most $n+1$; i.e., $\kappa(\Psi)\le n+1$.  As seen in \cite{ICCA7}, the $k$th power of $\Psi$ can be used to count paths and cycles of length $k$ in the graph.  The matrix exponential then counts paths and cycles of all lengths in the graph.  In particular, \begin{equation*}
\langle i\vert \zexp(\Psi)\vert i\rangle =\sum_{\{I\in 2^{[n]}\}}\frac{\alpha_I}{|I|!}\zeta_I,
\end{equation*}
where $\alpha_I$ is the number of cycles on vertices indexed by $I$ based at vertex $i$.  Further, for $i\ne j$, taking $\langle \zeta_{\{i\}}\vert$ to mean the row vector with $\zeta_{\{i\}}$ in the $i$th position and zeros elsewhere, 
\begin{equation*}
\langle \zeta_{\{i\}}\vert \zexp(\Psi)\vert j\rangle =\sum_{\{I\in 2^{[n]}\}}\frac{\alpha_I}{(|I|-1)!}\zeta_I,
\end{equation*}
where $\alpha_I$ denotes the number of paths from $i$ to $j$ on vertices indexed by $I$.

\begin{example}
\begin{figure}
$$\includegraphics[width=160pt]{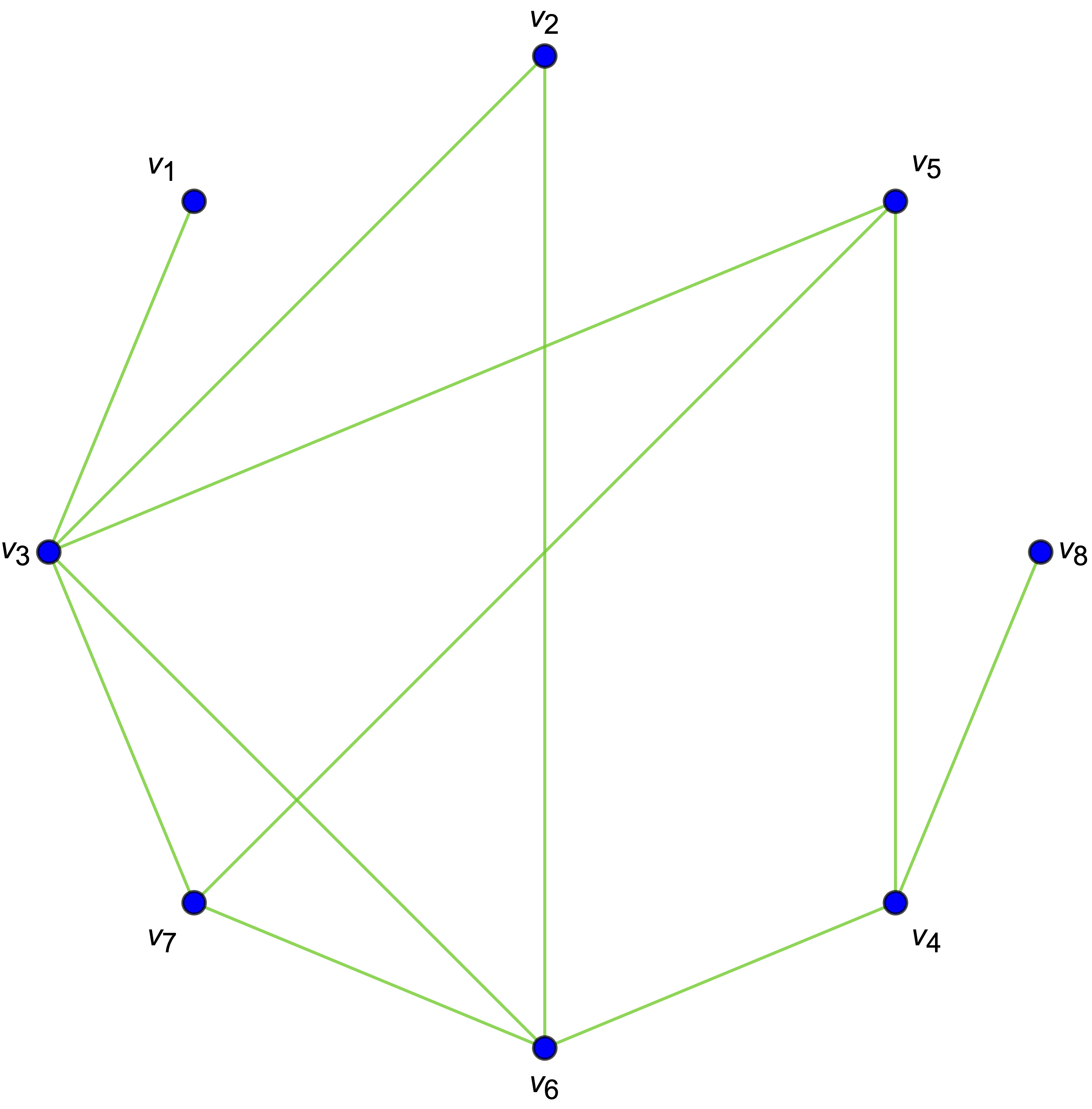}\hspace{10pt}
\includegraphics[width=160pt]{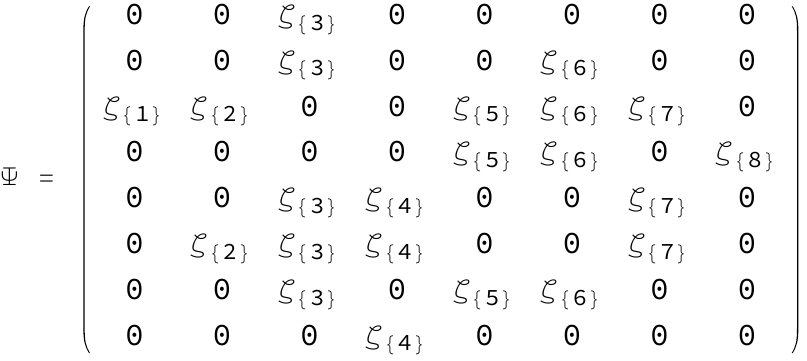}$$
\caption{An $8$-vertex graph and its nilpotent adjacency matrix.\label{8vgraph}}
\end{figure}
In the graph of Figure \ref{8vgraph}, cycles based at vertex $4$ are revealed by the corresponding diagonal entry of $\zexp(\Psi)$:
\begin{align*}
\left\langle 4\left|\zexp(\Psi)\right|4\right\rangle &=1+\frac{1}{2} \zeta _{\{4,5\}}+\frac{1}{2} \zeta _{\{4,6\}}+\frac{1}{2} \zeta _{\{4,8\}}+\frac{1}{12} \zeta _{\{3,4,5,6\}}+\frac{1}{12} \zeta _{\{4,5,6,7\}}\\
&+\frac{1}{60} \zeta _{\{2,3,4,5,6\}}+\frac{1}{30} \zeta _{\{3,4,5,6,7\}}+\frac{1}{360} \zeta _{\{2,3,4,5,6,7\}}.
\end{align*}
All paths from vertex $4$ to vertex $7$ are revealed by the entry in row-4, column-7:
\begin{align*}
\left\langle \zeta _{\{4\}}\left|\zexp(\Psi)\right|7\right\rangle&= \frac{1}{2} \zeta _{\{4,6,7\}}+\frac{1}{6} \zeta _{\{1,4,7,8\}}+\frac{1}{24} \zeta _{\{4,5,6,7,8\}}+\frac{1}{120} \zeta _{\{1,4,5,6,7,8\}}.
\end{align*}
\end{example}

\subsection{Matrices $A\in\Mat(m;\CZ)$ satisfying $\zco A\zdu A=\zdu A\zco A$}

When $\zco A$ and $\zdu A$ commute, the zeon matrix exponential is reduced to a finite sum involving the scalar matrix exponential $\exp(\zco A)$.  In particular, 
\begin{align}\label{finite sum comm case}
\zexp(A)=\zexp(\zco A+\zdu A)&=\exp(\zco A)\zexp(\zdu A)\nonumber\\
&=\exp(\zco A)\sum_{\ell=0}^{\kappa(\zdu A)}\frac{(\zdu A)^\ell}{\ell!},
\end{align}
where the scalar matrix exponential $\exp(\zco A)$ remains an infinite series, computed by  ordinary methods.   In some cases, the exponential can be reduced to a truly finite sum, as seen below.

\subsubsection{Application: The zeon Laplacian of a degree-regular graph}

The zeon combinatorial Laplacian of a graph $G$ on $n$ vertices is an $n\times n$ matrix $\Lambda=D-\Psi$, where $D$ is a diagonal matrix of vertex degrees and $\Psi$ is the nilpotent adjacency matrix of $G$.   Here, $\zco\Lambda=D$ and $\zdu\Lambda=-\Psi$.

As was shown in previous work, when $\Lambda$ is the zeon Laplacian of {\em any} finite graph with no isolated vertices, the inverse $\Lambda^{-1}$ enumerates all paths in the graph \cite{zeon laplacian}.   

When $\Lambda$ is the zeon combinatorial Laplacian of an $r$-regular graph $G$ on $n$ vertices,  $\Lambda=D-\Psi$, where $D=r\mI_n$ and $\Psi$ is the nilpotent adjacency matrix of $G$.   In this case, it is clear that $D\Psi=\Psi D$ so that the matrix exponential of $\Lambda$ once again reduces to a finite sum.   Moreover, the exponential $\exp(\zco \Lambda)$ from \eqref{finite sum comm case} has the computationally simple form $\exp(D)=\exp(r\mI_n)$.  Hence, when $i\ne j$ we have \begin{align*}
\langle \zeta_{\{i\}}\vert \zexp(\Lambda)\vert j\rangle&=\langle \zeta_{\{i\}}\vert e^r\sum_{\ell=0}^{\kappa(\Psi)}\frac{\Psi^\ell}{\ell!}\vert j\rangle\\
&=e^r\sum_{I\subseteq V}\frac{\omega_I}{|I|!} \zeta_{I},\end{align*}
where $\omega_I$ denotes the number of paths from $v_i$ to $v_j$ on vertices indexed by $I$.  Further,  when $1\le i \le n$,
\begin{equation*}
\left<i\vert \zexp(\Lambda)\vert i\right>=e^r\sum_{I\subseteq V}\frac{\omega_I}{|I|!} \zeta_{I},\end{equation*}
where $\omega_I$ denotes the number of cycles on vertex set $I$ based at $v_i\in I$.

\subsection{Diagonalization}

When $A$ is a spectrally simple square matrix of order $n$, we take as basis the eigenvectors $\epsilon=\{v_1, \ldots, v_n\}$ of $A$ (regarded as column vectors) and form the change of basis matrix $[\mI]_\epsilon^\beta=\left(v_1\mid v_2\mid \cdots\mid v_n\right)$, where $\beta$ denotes the standard ordered basis $\{e_1, \ldots, e_n\}$ of $\CZ^n$.  The resulting diagonal matrix whose entries are the zeon eigenvalues of $A$ is given by \begin{equation}\label{matrix diagonalization}
\Delta=[A]_\epsilon= [\mI]_\beta^\epsilon A[\mI]_\epsilon^\beta.
\end{equation}

It follows that the matrix exponential of $A$ is given by \begin{align}\label{diagonal exp}
\zexp(A)&=[\mI]_\epsilon^\beta\zexp(\Delta) [\mI]_\beta^\epsilon\nonumber\\
&=[\mI]_\epsilon^\beta\begin{pmatrix}
\zexp(\lambda_1)& 0&0&\cdots&0\\
0&\zexp(\lambda_2)&0&\cdots&0\\
0&0&\ddots&0 &0\\
0&0&0&\ddots&0\\
0&0&\cdots&0&\zexp(\lambda_n)
\end{pmatrix} [\mI]_\beta^\epsilon.
\end{align}

\begin{example}
Consider the spectrally simple zeon matrix \begin{equation}\label{diagonalization example}
A=\begin{pmatrix}
 \zeta _{\{1,2\}}+1 & \zeta _{\{3\}} & 2+\zeta _{\{3\}} \vspace{3pt}\\
 2 \zeta _{\{3\}} & \zeta _{\{2\}} & \zeta _{\{2\}}\vspace{3pt} \\
 2+\zeta _{\{1\}} & \zeta _{\{2\}} & 1 \\
\end{pmatrix}.
\end{equation}
The eigenvalues of $A$ are \begin{align*}
\lambda_1&= 3+\frac{1}{2} \zeta _{\{1,2\}}+\frac{1}{8} \zeta _{\{1,3\}}+\frac{1}{2} \zeta _{\{2,3\}}-\frac{1}{8} \zeta _{\{1,2,3\}}+\frac{\zeta _{\{1\}}}{2}+\frac{\zeta _{\{3\}}}{2},\\
\lambda_2&=-1+\frac{1}{2} \zeta _{\{1,2\}}-\frac{1}{8} \zeta _{\{1,3\}}+\frac{3}{2} \zeta _{\{2,3\}}-\frac{7}{8} \zeta _{\{1,2,3\}}-\frac{\zeta _{\{1\}}}{2}-\frac{\zeta _{\{3\}}}{2},\text{ \rm and}\\
\lambda_3&= -2 \zeta _{\{2,3\}}+\zeta _{\{1,2,3\}}+\zeta _{\{2\}}.
\end{align*}
We take corresponding eigenvectors to be \begin{align*}
v_1&= \left(
\begin{array}{c}
 1+\frac{1}{4} \zeta _{\{1,2\}}-\frac{1}{16} \zeta _{\{1,3\}}-\frac{1}{12} \zeta _{\{2,3\}}+\frac{17}{144} \zeta _{\{1,2,3\}}-\frac{\zeta _{\{1\}}}{4}+\frac{\zeta _{\{3\}}}{4} \vspace{3pt}\\
 -\frac{1}{18} \zeta _{\{1,2\}}-\frac{5}{18} \zeta _{\{1,3\}}+\frac{1}{6} \zeta _{\{2,3\}}-\frac{5}{72} \zeta _{\{1,2,3\}}+\frac{\zeta _{\{2\}}}{3}+\frac{2 \zeta _{\{3\}}}{3} \vspace{3pt}\\
 1 \\
\end{array}
\right),\\
v_2&= \left(
\begin{array}{c}
-1+\frac{1}{4} \zeta _{\{1,2\}}+\frac{1}{16} \zeta _{\{1,3\}}-\frac{1}{4} \zeta _{\{2,3\}}+\frac{7}{16} \zeta _{\{1,2,3\}}+\frac{\zeta _{\{1\}}}{4}-\frac{\zeta _{\{3\}}}{4}\vspace{3pt}\\
\frac{1}{2} \zeta _{\{1,2\}}-\frac{3}{2} \zeta _{\{1,3\}}-\frac{3}{2} \zeta _{\{2,3\}}+\frac{21}{8} \zeta _{\{1,2,3\}}-\zeta _{\{2\}}+2 \zeta _{\{3\}}\vspace{3pt}\\
1
\end{array}
\right),\text{ \rm and}\\
v_3&= \left(
\begin{array}{c}
\frac{4}{9} \zeta _{\{1,2\}}-\frac{2}{9} \zeta _{\{1,3\}}-\frac{4}{9} \zeta _{\{2,3\}}+\frac{13}{27} \zeta _{\{1,2,3\}}-\frac{1}{3} 2 \zeta _{\{2\}}+\frac{\zeta _{\{3\}}}{3}\vspace{3pt}\\
1\\
-\frac{2}{9} \zeta _{\{1,2\}}+\frac{1}{9} \zeta _{\{1,3\}}+\frac{2}{9} \zeta _{\{2,3\}}-\frac{11}{27} \zeta _{\{1,2,3\}}+\frac{\zeta _{\{2\}}}{3}-\frac{2 \zeta _{\{3\}}}{3}
\end{array}
\right). 
\end{align*}

Writing $[\mI]_\epsilon^\beta=(v_1\vert v_2\vert v_3)$ and $[\mI]_\beta^\epsilon=(v_1\vert v_2\vert v_3)^{-1}$, we have \begin{equation*}
[A]\epsilon=[\mI]_\beta^\epsilon A [\mI]_\epsilon^\beta=\begin{pmatrix}\lambda_1&0&0\\0&\lambda_2&0\\0&0&\lambda_3\end{pmatrix},
\end{equation*}
so that  \begin{equation*}
\zexp(A)=[\mI]_\epsilon^\beta\begin{pmatrix}
\zexp({\lambda_1})&0&0\\ 0&\zexp({\lambda_2})&0\\0&0&\zexp({\lambda_3})\end{pmatrix}[\mI]_\beta^\epsilon,
\end{equation*}
where \begin{align*}
\zexp({\lambda_1})&=e^3\left(1+\frac{1}{2} \zeta _{\{1,2\}}+\frac{3}{8}  \zeta _{\{1,3\}}+\frac{1}{2}  \zeta _{\{2,3\}}+\frac{3}{8}  \zeta _{\{1,2,3\}}+\frac{1}{2} \zeta _{\{1\}}+\frac{1}{2}  \zeta _{\{3\}}\right) ,\\
\zexp({\lambda_2})&= \frac{1}{e}\left(1+\frac{1}{2}\zeta _{\{1,2\}}+\frac{1}{8}\zeta _{\{1,3\}}+\frac{3}{2} \zeta _{\{2,3\}}-\frac{15}{8} \zeta _{\{1,2,3\}}-\frac{1}{2}\zeta _{\{1\}}-\frac{1}{2}\zeta _{\{3\}}\right),\\
\zexp({\lambda_3})&= 1-2 \zeta _{\{2,3\}}+\zeta _{\{1,2,3\}}+\zeta _{\{2\}}.
\end{align*}
\end{example}

We close with a special case of a broader result relating the determinant of the matrix exponential to the exponential of the matrix trace.

\begin{proposition}
If $A\in Mat(m;\CZ)$ is spectrally simple, then  \begin{equation}
\det(\zexp(A))=\zexp(\tr A).
\end{equation}
\end{proposition}

\begin{proof}
Letting $\{\lambda_1, \ldots, \lambda_m\}$ denote the eigenvalues of $A$ and diagonalizing $A$ as in \eqref{diagonal exp}, it follows that 

\begin{align*}
\det(\zexp(A))&=\det\left([\mI]_\epsilon^\beta \zexp(\Delta) [\mI]_\beta^\epsilon\right)\\
&=\det\left([\mI]_\epsilon^\beta\right)\det\left(\zexp(\Delta)\right)\det\left( [\mI]_\beta^\epsilon\right)\\
&=\det\left(\zexp(\Delta)\right)\\
&=\prod_{j=1}^m \zexp(\lambda_j)=\zexp\left(\sum_{j=1}^m\lambda_j\right)=\exp(\tr A).
\end{align*}
\end{proof}

\section{Conclusion and Avenues for Further Research}\label{conclusion}

Viewing a matrix $M\in\Mat(m;{\CZ})$ as a linear operator on the $\CZ$-module ${\CZ}^m$, zeon eigenvalues of $M$ have been defined as the spectrally simple zeon zeros of the characteristic polynomial of $M$.  Further, essential notions of zeon eigenvectors and linear independence have been established and used to define orthogonal projections and a resolution of the identity.
 
It has been shown that given a self-adjoint zeon matrix $A\in\Mat(m;\CZ)$  having $m$ spectrally simple eigenvalues $\lambda_1, \ldots, \lambda_m$  associated with normalized zeon eigenvectors $v_1, \ldots, v_m$, the following holds: \begin{equation*}
A=\bigoplus_{j=1}^m \lambda_j\pi_j,
\end{equation*}
where $\pi_j=v_j{v_j}^\dag$ is an orthogonal projection onto $\spn\{v_j\}$ for $j=1,\ldots, m$.

Avenues for further investigation include the following: 
\begin{enumerate}[i.]
\item a detailed treatment of zeon matrix norms and norm inequalities, which should lead to applications involving zeon matrix sequences and series;
\item properties, significance, and applications of orthogonal and unitary zeon matrices.
\end{enumerate}

\subsection*{Declarations}

\subsubsection*{Data Availability}

No data sets were generated or analyzed during the current study.

\subsubsection*{Funding}

No funding was received to assist with the preparation of this manuscript.

\subsubsection*{Conflicts of Interest/Competing Interests}

The author has no relevant financial or non-financial interests to disclose.


\begin{thebibliography}{}

\bibitem{DavisStaples2019} Davis, A., Staples, G.S. Zeon and idem-Clifford formulations of Boolean satisfiability, {\it Adv. Appl. Clifford Algebras } (2019) 29:60.
\url{DOI:10.1007/s00006-019-0978-8}. 

\bibitem{Feinsilverzeons} Feinsilver, P.  Zeon algebra, Fock space, and Markov chains, {\em Commun. Stoch. Anal.} {\bf 2} (2008), 263-275.
 
\bibitem{FeinsilverMcSorleyIJC} Feinsilver, P., McSorley, J.  Zeons, permanents, the Johnson scheme, and generalized derangements, {\em Int. J. Comb.} Article ID 539030, 29 pages, (2011).\\ \url{dx.doi.org/10.1155/2011/539030}

\bibitem{Mansour} Mansour, T., Schork, M. On the Differential Equation of First and Second Order in the Zeon Algebra. Adv. Appl. Clifford Algebras 31, 21 (2021). https://doi.org/10.1007/s00006-021-01126-7

\bibitem{NetoJIS2014} Neto, A.F. Higher order derivatives of trigonometric functions, Stirling numbers of the second kind, and zeon algebra,  {\em Journal of Integer Sequences}, {\bf 17} (2014), Article 14.9.3.

\bibitem{NetoJIS2015} Neto, A.F. Carlitz's identity for the Bernoulli numbers and zeon algebra, {\em  J. Integer Sequences}, {\bf 18} (2015), Article 15.5.6.

\bibitem{NetoJIS2016} A.F.~Neto. A note on a theorem of Guo, Mez\"{o}, and Qi,  {\em J. Integer Sequences}, {\bf 19} (2016), Article 16.4.8.

\bibitem{NetoSIAM} Neto, A.F., dos Anjos, P.H.R. Zeon algebra and combinatorial identities, {\em SIAM Review}, {\bf 56} (2014), 353-370.

\bibitem{OCGraphs} R.~Schott, G.S.~Staples. {\it Operator Calculus on Graphs (Theory and Applications in Computer Science)},  Imperial College Press, London, 2012.  

\bibitem{StaplesStellhorn2017} G.S.~Staples, T.~Stellhorn.  Zeons, orthozeons, and graph colorings, {\it Adv. Appl. Clifford Algebras}, {\bf 27} (2017), 1825-1845. \url{doi:10.1007/s00006-016-0732-4}

  
\bibitem{zeoncalculus} Staples, G.S. Differential calculus of zeon functions, {\it Adv. Appl. Clifford Algebras} {\bf 29}, 25 (2019).


\bibitem{ICCA7} Staples, G.S. A new adjacency matrix for finite graphs, {\it Adv. Appl. Clifford Algebras}, {\bf 18} (2008), 979-991.  \url{doi:10.1007/s00006-008-0116-5}

\bibitem{zeon laplacian} Staples, G.S., Zeon matrix inverses and the zeon combinatorial Laplacian, {\it Adv. Appl. Clifford Algebras} {\bf 31}, 40 (2021).    \url{doi:10.1007/s00006-021-01152-5}.

\bibitem{simplezeonzeros} Staples, G.S. Spectrally simple zeros of zeon polynomials,{\it Adv. Appl. Clifford Algebras} {\bf 31}, 66 (2021).  \url{https://doi.org/10.1007/s00006-021-01167-y}.

\bibitem{zeon laplacian eigenvalues} Staples, G.S. Spectral properties of the zeon combinatorial Laplacian: cycles in finite graphs. {\em Int J Theor Phys} {\bf 64}, 163 (2025). \url{https://doi.org/10.1007/s10773-025-06032-3}.


\bibitem{staplesweygandt} Staples, G.S. \& Weygandt, A. Elementary Functions and Factorizations of Zeons, {\it Adv. Appl. Clifford Algebras} {\bf 28}, 12 (2018).



\end{thebibliography}
\end{document}